\title{ Fixed point adjunctions for equivariant module spectra}
\author{J.~P.~C.~Greenlees}
\address{Department of Pure Mathematics, The Hicks Building, 
Sheffield S3 7RH. UK.}
\email{j.greenlees@sheffield.ac.uk}
\author{B.~Shipley}
\thanks{The first author is grateful for support under EPSRC grant
  number EP/H040692/1. 
    This material is based upon work by the second author supported by the National Science Foundation under Grant No. DMS-1104396.}
\address{Department of Mathematics, Statistics and Computer Science, University of Illinois at
Chicago, 508 SEO m/c 249,
851 S. Morgan Street,
Chicago, IL, 60607-7045, USA}
\email{bshipley@math.uic.edu}
\date{}
\newcommand{\Mdef}[2]{\newcommand{#1}{\relax \ifmmode #2 \else $#2$\fi}}
\newcommand{\finbuilds}{\models}
\newcommand{\builds}{\vdash}
\newcommand{\sm }{\wedge}
\newcommand{\tensor}{\otimes}
\newcommand{\Hom}{\mathrm{Hom}}
\Mdef{\bhom}{\mathbf{\hat{H}om}}
\Mdef{\Mod}{\mathrm{mod}}
\newcommand{\st}{\; | \;}
\newtheorem{thm}{Theorem}[section]
\newtheorem{lemma}[thm]{Lemma}
\newtheorem{prop}[thm]{Proposition}
\newtheorem{cor}[thm]{Corollary}
\theoremstyle{definition}
\newtheorem{defn}[thm]{Definition}
\newtheorem{construction}[thm]{Construction}
\newtheorem{example}[thm]{Example}
\newtheorem{remark}[thm]{Remark}
\newcommand{\qqed}{\qed \\[1ex]}
\renewenvironment{proof}[1][\hspace*{-.8ex}]{\noindent {\bf Proof #1:\;}}{\qqed}
\Mdef{\PH} {\Phi^H}
\Mdef{\PN} {\Phi^N}
\Mdef{\PL} {\Phi^L}
\Mdef{\PT} {\Phi^{\T}}
\Mdef{\ef}{E{\cF}_+}
\Mdef{\etf}{\widetilde{E}{\cF}}
\Mdef{\eg}{E{G}_+}
\Mdef{\etg}{\tilde{E}{G}}
\Mdef{\infl}{\mathrm{inf}}
\Mdef{\inflGbarG}{\mathrm{inf}_{G/N}^G}
\Mdef{\defl}{\mathrm{def}}
\Mdef{\res}{\mathrm{res}}
\Mdef{\ind}{\mathrm{ind}}
\Mdef{\coind}{\mathrm{coind}}
\Mdef{\univ}{\mathcal{U}}
\Mdef{\Fp}{\mathbb{F}_p}
\Mdef{\Zpinfty}{\macZ /p^{\infty}}
\Mdef{\Zpadic}{\macZ_p^{\wedge}}
\newcommand{\bi}{\begin{itemize}}
\newcommand{\be}{\begin{enumerate}}
\newcommand{\bc}{\begin{center}}
\newcommand{\bd}{\begin{description}}
\newcommand{\ei}{\end{itemize}}
\newcommand{\ee}{\end{enumerate}}
\newcommand{\ec}{\end{center}}
\newcommand{\ed}{\end{description}}
\newcommand{\dichotomy}[2]{\left\{ \begin{array}{ll}#1\\#2 \end{array}\right.}
\newcommand{\adjunction}[4]{
\diagram
#1:#2 \rrto<0.7ex> &&
#3  \llto<0.7ex> :#4 
\enddiagram}
\newcommand{\RLadjunction}[4]{
\diagram
#1:#2 \rrto<-0.7ex> &&
#3  \llto<-0.7ex> :#4 
\enddiagram}
\newcommand{\lra}{\longrightarrow}
\newcommand{\Gspectra}{\mbox{$G$-{\bf spectra}}}
\newcommand{\abgp}{\mathbf{AbGp}}
\Mdef{\we}{\mathbf{we}}
\Mdef{\fib}{\mathbf{fib}}
\Mdef{\cof}{\mathbf{cof}}
\Mdef{\BI}{\mathcal{BI}}
\Mdef{\A}{\mathbb{A}}
\Mdef{\B}{\mathbb{B}}
\newcommand{\macC}{\mathbb{C}}
\Mdef{\D}{\mathbb{D}}
\Mdef{\E}{\mathbb{E}}
\Mdef{\T}{\mathbb{T}}
\Mdef{\F}{\mathbb{F}}
\Mdef{\G}{\mathbb{G}}
\Mdef{\I}{\mathbb{I}}
\Mdef{\N}{\mathbb{N}}
\newcommand{\macQ}{\mathbb{Q}}
\newcommand{\macR}{\mathbb{R}}
\Mdef{\bbS}{\mathbb{S}}
\newcommand{\macZ}{\mathbb{Z}}
\Mdef{\bA}{\mathbb{A}}
\Mdef{\bB}{\mathbb{B}}
\Mdef{\bC}{\mathbb{C}}
\Mdef{\bD}{\mathbb{D}}
\Mdef{\bE}{\mathbb{E}}
\Mdef{\bF}{\mathbb{F}}
\Mdef{\bG}{\mathbb{G}}
\Mdef{\bH}{\mathbb{H}}
\Mdef{\bI}{\mathbb{I}}
\Mdef{\bJ}{\mathbb{J}}
\Mdef{\bK}{\mathbb{K}}
\Mdef{\bL}{\mathbb{L}}
\Mdef{\bM}{\mathbb{M}}
\Mdef{\bN}{\mathbb{N}}
\Mdef{\bO}{\mathbb{O}}
\Mdef{\bP}{\mathbb{P}}
\Mdef{\bQ}{\mathbb{Q}}
\Mdef{\bR}{\mathbb{R}}
\Mdef{\bS}{\mathbb{S}}
\Mdef{\bT}{\mathbb{T}}
\Mdef{\bU}{\mathbb{U}}
\Mdef{\bV}{\mathbb{V}}
\Mdef{\bW}{\mathbb{W}}
\Mdef{\bX}{\mathbb{X}}
\Mdef{\bY}{\mathbb{Y}}
\Mdef{\bZ}{\mathbb{Z}}
\Mdef{\cA}{\mathcal{A}}
\Mdef{\cB}{\mathcal{B}}
\Mdef{\cC}{\mathcal{C}}
\Mdef{\mcD}{\mathcal{D}} 
\Mdef{\cE}{\mathcal{E}}
\Mdef{\cF}{\mathcal{F}}
\Mdef{\cG}{\mathcal{G}}
\Mdef{\mcH}{\mathcal{H}} 
\Mdef{\cI}{\mathcal{I}}
\Mdef{\cJ}{\mathcal{J}}
\Mdef{\cK}{\mathcal{K}}
\Mdef{\mcL}{\mathcal{L}}
\Mdef{\cM}{\mathcal{M}}
\Mdef{\cN}{\mathcal{N}}
\Mdef{\cO}{\mathcal{O}}
\Mdef{\cP}{\mathcal{P}}
\Mdef{\cQ}{\mathcal{Q}}
\Mdef{\mcR}{\mathcal{R}}
\Mdef{\cS}{\mathcal{S}}
\Mdef{\cT}{\mathcal{T}}
\Mdef{\cU}{\mathcal{U}}
\Mdef{\cV}{\mathcal{V}}
\Mdef{\cW}{\mathcal{W}}
\Mdef{\cX}{\mathcal{X}}
\Mdef{\cY}{\mathcal{Y}}
\Mdef{\cZ}{\mathcal{Z}}
\Mdef{\At}{\tilde{A}}
\Mdef{\Bt}{\tilde{B}}
\Mdef{\Ct}{\tilde{C}}
\Mdef{\Et}{\tilde{E}}
\Mdef{\Ht}{\tilde{H}}
\Mdef{\Kt}{\tilde{K}}
\Mdef{\Lt}{\tilde{L}}
\Mdef{\Mt}{\tilde{M}}
\Mdef{\Nt}{\tilde{N}}
\Mdef{\Pt}{\tilde{P}}
\Mdef{\tA}{\tilde{A}}
\Mdef{\tB}{\tilde{B}}
\Mdef{\tC}{\tilde{C}}
\Mdef{\tE}{\tilde{E}}
\Mdef{\tH}{\tilde{H}}
\Mdef{\tK}{\tilde{K}}
\Mdef{\tL}{\tilde{L}}
\Mdef{\tM}{\tilde{M}}
\Mdef{\tN}{\tilde{N}}
\Mdef{\tP}{\tilde{P}}
\Mdef{\ft}{\tilde{f}}
\Mdef{\xt}{\tilde{x}}
\Mdef{\yt}{\tilde{y}}
\Mdef{\Ab}{\overline{A}}
\Mdef{\Bb}{\overline{B}}
\Mdef{\Cb}{\overline{C}}
\Mdef{\Db}{\overline{D}}
\Mdef{\Eb}{\overline{E}}
\Mdef{\Fb}{\overline{F}}
\Mdef{\Gb}{\overline{G}}
\Mdef{\Hb}{\overline{H}}
\Mdef{\Ib}{\overline{I}}
\Mdef{\Jb}{\overline{J}}
\Mdef{\Kb}{\overline{K}}
\Mdef{\Lb}{\overline{L}}
\Mdef{\Mb}{\overline{M}}
\Mdef{\Nb}{\overline{N}}
\Mdef{\Ob}{\overline{O}}
\Mdef{\Pb}{\overline{P}}
\Mdef{\Qb}{\overline{Q}}
\Mdef{\Rb}{\overline{R}}
\Mdef{\Sb}{\overline{S}}
\Mdef{\Tb}{\overline{T}}
\Mdef{\Ub}{\overline{U}}
\Mdef{\Vb}{\overline{V}}
\Mdef{\Wb}{\overline{W}}
\Mdef{\Xb}{\overline{X}}
\Mdef{\Yb}{\overline{Y}}
\Mdef{\Zb}{\overline{Z}}
\Mdef{\db}{\overline{d}}
\Mdef{\hb}{\overline{h}}
\Mdef{\qb}{\overline{q}}
\Mdef{\rb}{\overline{r}}
\Mdef{\tb}{\overline{t}}
\Mdef{\ub}{\overline{u}}
\Mdef{\vb}{\overline{v}}
\Mdef{\hc}{\hat{c}}
\Mdef{\he}{\hat{e}}
\Mdef{\hf}{\hat{f}}
\Mdef{\hA}{\hat{A}}
\Mdef{\hH}{\hat{H}}
\Mdef{\hJ}{\hat{J}}
\Mdef{\hM}{\hat{M}}
\Mdef{\hP}{\hat{P}}
\Mdef{\hQ}{\hat{Q}}
\Mdef{\thetab}{\overline{\theta}}
\Mdef{\phib}{\overline{\phi}}
\Mdef{\uA}{\underline{A}}
\Mdef{\uB}{\underline{B}}
\Mdef{\uC}{\underline{C}}
\Mdef{\uD}{\underline{D}}
\Mdef{\bolda}{\mathbf{a}}
\Mdef{\boldb}{\mathbf{b}}
\Mdef{\boldD}{\mathbf{D}}
\Mdef{\fm}{\frak{m}}
\Mdef{\fp}{\frak{p}}
\Mdef{\eps}{\epsilon}
\renewcommand{\Et}{\cE_t}
\newcommand{\M}{\bM}
\newcommand{\cKcellM}{\mbox{$\cK$-cell-$\bM$}}
\newcommand{\modcatG}[1]{\mbox{$#1$-mod-$G$-spectra}}
\newcommand{\modcatGN}[1]{\mbox{$#1$-mod-$G/N$-spectra}}
\newcommand{\RmodG}{\modcatG{R}}
\newcommand{\RcellRmodG}{\mbox{$R$-cell-$R$-mod-$G$-spectra}}
\newcommand{\RNcellRNmodGN}{\mbox{$R^N$-cell-$R^N$-mod-$G/N$-spectra}}
\newcommand{\RNmod}{\modcatGN{R^N}}
\newcommand{\infRNmod}{\modcatG{\infl R^N}}
\renewcommand{\Gspectra}{\mbox{$G$-spectra}}
\newcommand{\GNspectra}{\mbox{$G/N$-spectra}}
\newcommand{\efp}{E\cF_+}
\newcommand{\etnotN}{\widetilde{E}[\not \supseteq N]}
\newcommand{\lr}[1]{\langle #1\rangle}
\newcommand{\dimC}{\mathrm{dim}_{\macC}}
\renewcommand{\DH}{D}
\newcommand{\Gbar}{\overline{G}}
\newcommand{\Hbar}{\overline{H}}
\newcommand{\GI}{\mathcal{GI}}
\newcommand{\piM}{\underline{\pi}^G}
\newcommand{\SO}{{\mathcal{SO}}}
\newcommand{\Rbar}{\overline{R}}
\newcommand{\PNequiv}{\mbox{$\Phi^N$-equiv}}
\newcommand{\GspectraoverN}{\mbox{$G$-spectra/$N$}}
\begin{document}

\begin{abstract}
We consider the Quillen adjunction between fixed points and inflation in the
context of equivariant module spectra over equivariant ring spectra,
and give numerous examples including some based on geometric fixed points and some on the Eilenberg-Moore spectral sequence.   
\end{abstract}

\maketitle

\tableofcontents

\section{Introduction}

\subsection{Motivation}

We can view the present paper in two ways. On the one hand we can view
it as an investigation of the formal properties of a basic change of
groups adjunction in equivariant topology. On the other hand we can
view it as giving a powerful context for proving two well known
general results. These are of course two sides of the same coin.

 The
change of groups adjunction is that between fixed points and
inflation, placed in the more general context of modules over ring
spectra, and will be introduced in the Subsection
\ref{subsec:context} below. 

The two well known results are usually considered in rather different
contexts. The first (Sections \ref{sec:GspectraoverN} and
\ref{sec:RmodoverN})
is the fact that the category of $G$-spectra lying
over a normal subgroup $N$ is equivalent to the category of $G/N$-spectra. The second
(Section \ref{sec:EilenbergMoore}) is the Eilenberg-Moore spectral sequence which states that (under
suitable hypotheses) if $X$ is a free $G$-space, then $H^*(X)$ can be calculated from $H^*(X/G)$ as an
$H^*(BG)$-module. Our versions of these results are both expressed as Quillen
equivalences of model categories, and it is striking that these two things are rather
formal consequences of a single formal statement together with the
Cellularization Principle \cite{GScell}. We give a variety of other interesting
specializations of the general results. 

Particular instances of  our general result are central
ingredients in our work \cite{gfreeq2, tnq3} giving algebraic models for categories of
rational equivariant spectra. 

\subsection{Context}
\label{subsec:context}
When group actions are under consideration there is often a valuable
adjunction between passage to fixed points under a normal subgroup $N$
of the ambient group $G$ and inflation from the quotient group $G/N$.

As a first example,  if we are considering vector spaces we obtain an adjunction
in representation theory
$$  \mbox{$G$-Hom}(\inflGbarG  X,Y) \cong  \mbox{$G/N$-Hom}( X,Y^N),  $$
where $Y$ is a vector space with $G$ action, and $X$ is a vector space
with $G/N$ action. The fixed point functor $(\cdot )^N$ and the
inflation functor $\inflGbarG$ require no further explanation in this
case, but we will use this terminology in other cases where the
functors may be less familiar. The adjunction  extends to graded vector
spaces, and to graded vector spaces with a differential (chain
complexes).  If we replace vector spaces by  topological spaces
the adjunction is an elementary statement in equivariant topology.

In these cases, we may choose model structures so that  the adjunction
is a Quillen adjunction, so there is an induced   adjunction between homotopy 
categories
$$ [\inflGbarG  X,Y]^G \cong  [X,Y^N]^{G/N}.  $$

Moving towards cases of concern to us here, we consider categories 
of  equivariant orthogonal spectra \cite{mm} as our
model.  The Lewis-May fixed point adjunction \cite{LMS(M)} 
$$\RLadjunction
{(\cdot )^N}
{\mbox{$G$-spectra}}
{\mbox{$G/N$-spectra}}
{\inflGbarG}
$$
applies to orthogonal spectra \cite[V.3.4]{mm}, and it is a Quillen adjunction. 

The principal purpose of the present paper is to consider a variation
of this adjunction with categories of module spectra over ring
spectra.

\subsection{The case at hand}

The Lewis-May categorical fixed point functor on orthogonal spectra 
is lax symmetric monoidal (preserves smash products precisely, but not units)
\cite[V.3.8]{mm},  so that  if $R$ is a ring $G$-spectrum,  $R^N$ is a
ring $G/N$-spectrum. 

The purpose of this paper is to  consider the 
 Quillen adjunction
$$
\RLadjunction
{\Psi^N}
{\RmodG}
{\RNmod}
{R\tensor_{\infl R^N}(\cdot)} $$
(here and elsewhere we follow Quillen in putting the left adjoint
arrow on top when describing adjunctions). The existence of this follows from the
general discussion on  the interaction of model categories and
monoidal structures in \cite{ss-mon}.   Indeed, in our case 
inflation is strong symmetric monoidal \cite[V.1.5(v)]{mm}, and where
the left adjoint has this property, the functors on spectra induce a
Quillen pair relating the categories of modules. We have used the
notation $\Psi^N$ for the Lewis-May fixed points to highlight the
change of ambient ring from $R$ to $R^N$. 

It turns out that this is rather a rich context, and we make explicit
a number of  examples where the adjunction gives a very
close relationship between the categories in 
a number of interesting cases. The key to this is the Cellularization
Principle \ref{thm:cellprin}  \cite{GScell}, which gives a language to
describe the deviation from equivalence. The examples  include spectra and modules
concentrated over $N$ (giving the classical geometric fixed point equivalence) in Sections 
\ref{sec:GspectraoverN} and  \ref{sec:RmodoverN}, Eilenberg-Mac Lane
spectra in Section \ref{sec:EilenbergMacLane}, sphere spectra in
Section \ref{sec:sphere}  and cochains on a free $G$-space (giving the Eilenberg-Moore
theorem) in Section \ref{sec:EilenbergMoore}. 

\subsection{Relationship to other results}

We proved these results during our work on algebraic models for
rational torus-equivariant spectra, and special cases appeared in
the preprint \cite{tnq3}. Since they are of wider interest,
we present them separately here and refer to  \cite{gfreeq2} and
\cite{tnq3new} for these applications.




\section{Preliminaries}

\subsection{The cellularization principle}
\label{sec:cell}

A key ingredient in applications is the Cellularization Principle of
\cite{GScell}, which states that a Quillen adjunction induces a Quillen equivalence between
cellularized stable model categories in the sense of \cite{hh}
provided the cells are small and  the unit and counit are equivalences on cells. 
This is analogous to the statement that a natural transformation of cohomology
theories that induces an isomorphism on spheres is an equivalence.

\begin{thm}\label{thm:cellprin} {\bf (The Cellularization Principle.)}
Let $\M$ and $\N$ be right proper, stable, 
cellular model
categories with $F: \M \to \N$ a left Quillen 
functor with right adjoint $U$.  Let $Q$ be a cofibrant replacement functor in $\M$ and
$R$ a fibrant replacement functor in $\N$. 
\begin{enumerate}
\item Let $\cK= \{ A_{\alpha} \}$ be a set of objects in $\M$ with $FQ\cK = \{ FQ A_{\alpha}\}$ the corresponding set in $\N$.  
Then $F$ and $U$
induce a Quillen adjunction 
\[ \adjunction{F}{\cKcellM}{\mbox{$FQ\cK$-cell-$\N$}}{U}\]
between the $\cK$-cellularization
of $\M$ and the $FQ\cK$-cellularization of $\N$.

\item If $\cK= \{ A_{\alpha} \}$ is a stable set of small objects in
  $\M$ such that for each $A$ in $\cK$ the object $FQA$ is small in $\N$ and 
the derived unit $QA \to URFQA$ is a weak equivalence in $\M$, 
then $F$ and $U$ induce a Quillen equivalence between the cellularizations:
\[ \cKcellM \simeq_{Q} \mbox{$FQ\cK$-cell-$\N$}. \]

\item If  $\mcL = \{B_{\beta}\}$ is a stable set of small objects in
  $\N$ such that for each $B$ in $\mcL$ the object $URB$ is small in $\M$ and  
the derived counit  $FQURB \to RB$ is a weak equivalence in $\N$,  then $F$ and $U$
induce a Quillen equivalence between the cellularizations:
\[ \mbox{$UR\mcL$-cell-$\M$} \simeq_{Q} \mbox{$\mcL$-cell-$\N$}.\]
\end{enumerate} 
\end{thm}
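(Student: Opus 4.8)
The plan is to prove all three parts of the Cellularization Principle in parallel, since parts (2) and (3) share the same structure — indeed (3) is (2) applied to the Quillen adjunction $(U,F)$ read in the opposite direction once one knows it is a Quillen adjunction, which is precisely part (1) applied on the other side. So the real content is parts (1) and (2), and I will organize the argument accordingly.

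For part (1), first I would recall from \cite{hh} that if $\M$ is a right proper cellular (stable) model category and $\cK$ is a set of cofibrant objects, the $\cK$-cellularization $\cKcellM$ exists: it is the right Bousfield localization, with the same cofibrations, whose weak equivalences are the $\cK$-cellular equivalences (maps inducing isomorphisms on $[A_\alpha,-]$ for all $\alpha$) and whose fibrant objects are the $\cK$-colocal ones. The key point is that $F$ is still left Quillen: it preserves cofibrations and trivial cofibrations because the cofibrations are unchanged and a trivial cofibration in the cellularized category is in particular a trivial cofibration in the original one (cellular equivalences include the original weak equivalences). Dually, to see that $U:\mbox{$FQ\cK$-cell-}\N \to \cKcellM$ preserves fibrations and trivial fibrations, I would use that $U$ already preserves fibrations and trivial fibrations for the original structures, and that a trivial fibration in $\cKcellM$ is an original trivial fibration; the only thing to check is that $U$ sends $FQ\cK$-colocal weak equivalences between fibrant objects to $\cK$-colocal weak equivalences, which follows from the adjunction isomorphism $[QA_\alpha, Uf] \cong [FQA_\alpha, f]$ on homotopy classes. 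This establishes the Quillen adjunction in (1).

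For part (2), the strategy is to show the derived unit and derived counit of the induced adjunction on cellularizations are weak equivalences, hence the Quillen adjunction is a Quillen equivalence. The cofibrant objects of $\cKcellM$ are built (as $\cK$-cellular objects, i.e. retracts of $\cK$-cell complexes) from the $A_\alpha$; smallness of the $A_\alpha$ and of the $FQA_\alpha$ lets one commute $F$, $U$, homotopy colimits and the relevant mapping spectra past the transfinite cell attachments. Since the category is stable, $\cK$-cellular objects form the smallest localizing (triangulated, closed under coproducts) subcategory containing $\cK$; so it suffices to check the derived unit $QA \to URFQA$ is an equivalence for $A \in \cK$ — which is exactly the hypothesis — and then propagate along cofibre sequences and coproducts using stability and smallness. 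For the counit, note that the fibrant objects of $\mbox{$FQ\cK$-cell-}\N$ are $FQ\cK$-colocal, and on such objects $Y$ one checks $FQURY \to RY$ is an $FQ\cK$-cellular equivalence by testing against $FQA_\alpha$: $[FQA_\alpha, FQURY] \cong [QA_\alpha, URY] \cong [A_\alpha, UY]$ (using the derived unit equivalence on $A_\alpha$ and adjunction) $\cong [FQA_\alpha, Y]$, where the last step is adjunction again. Part (3) then follows formally by symmetry, applying the already-proven (1) and (2) to the adjunction in which $U$ is regarded as the left adjoint after passing to the relevant cellularizations; the ``stable set of small objects'' hypothesis on $\mcL$ is what makes this symmetric application legitimate.

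The main obstacle I anticipate is the careful handling of smallness and the interaction between the transfinite construction of cellular objects and the functors $F$, $U$: one must ensure that ``small in $\M$'' (in the cellular-category sense, relative to the appropriate cardinal) is preserved or controlled well enough that $URFQ(-)$ commutes with the colimits defining cell complexes, and that the derived functors are computed correctly (i.e. that $FQ$ and $UR$ really do represent the derived functors on the cellularized structures, given that cofibrant replacement in $\cKcellM$ differs from that in $\M$). A secondary subtlety is verifying that the localizations are themselves stable and right proper so that the triangulated/localizing-subcategory argument is available — but this is part of the input we are allowed to quote from \cite{hh} and the hypotheses of the theorem. Once those technical points are in place, the homotopy-class manipulations via the adjunction are routine.
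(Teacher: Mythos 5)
The paper does not prove Theorem \ref{thm:cellprin} at all: it is imported from \cite{GScell}, so the only possible comparison is with the argument given there. In outline your proposal follows the same route as that source: for (1) the adjunction is checked on the right-adjoint side (fibrations are unchanged by cellularization, and $U$ takes colocal equivalences between fibrant objects to colocal equivalences via the derived adjunction on homotopy function complexes, the dual of Hirschhorn's criterion \cite{hh}); for (2) one propagates the unit equivalence from the cells to all $\cK$-cellular objects, using stability (the cellular objects are generated from the stable set $\cK$ under triangles and coproducts) and the two smallness hypotheses (so that maps out of $QA_\alpha$ and $FQA_\alpha$ commute with the relevant coproducts), and then verifies the counit on fibrant objects of the cellularized $\N$ by testing against the cells $FQA_\alpha$; (3) is the mirror image. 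So your plan is the intended one.

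One concrete correction is needed. You describe the cellularization (right Bousfield localization) as having ``the same cofibrations''; in fact it has the same \emph{fibrations} (hence the same trivial cofibrations and the same fibrant objects), while the cofibrations shrink and the trivial fibrations grow. Consequently your first argument for (1) --- ``$F$ preserves cofibrations because the cofibrations are unchanged'' --- does not work as stated, and the claim that a trivial fibration in $\cKcellM$ is an original trivial fibration is backwards. The repair is exactly your second, $U$-side argument, which is the one used in \cite{GScell}: $U$ preserves fibrations since these are unchanged on both sides, and it sends $FQ\cK$-cellular equivalences between fibrant objects to $\cK$-cellular equivalences because $\map^h(QA_\alpha, U(-))\simeq \map^h(FQA_\alpha,-)$; note that in (1) the set $\cK$ need not be stable, so you must use homotopy function complexes rather than just homotopy classes of maps. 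Finally, in your counit computation the isomorphism $[FQA_\alpha, FQ'URY]\cong[QA_\alpha, URY]$ (with $Q'$ the cellular replacement) is justified by the unit equivalence on the cellular object $Q'URY$, i.e.\ by your propagation step, not by the unit equivalence on $A_\alpha$ alone; with that rephrasing the argument agrees with the cited proof.
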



\subsection{Universal $G$-spaces}
If $\cK$ is a family of subgroups (i.e., a set of subgroups closed
under conjugation and passage to smaller subgroups), there is a 
universal space $E\cK$, characterized up to equivalence by the
fact that $(E\cK)^H$ is empty if $H\not \in \cK$ and is contractible 
if $H\in \cK$. We write $\widetilde{E}\cK$ for the unreduced suspension 
$S^0*E\cK$, so that there is a cofibre sequence
$$E\cK_+\lra S^0 \lra \widetilde{E}\cK . $$


For  normal subgroups $N$ we define certain spaces $E\lr{N}$ 
by the cofibre sequence
$$E[\subset N]_+\lra E[\subseteq N]_+\lra E\lr{N}.$$
We note that in fact $E\lr{N}\simeq EG/N_+\sm \widetilde{E}[\not \supseteq N]$.  

\subsection{Geometric isotropy. }
Recall that the geometric fixed point functor extends the fixed point
space functor in the sense that for based $G$-spaces $Y$
there is a $G/N$-equivalence  $\Phi^N\Sigma^{\infty}Y\simeq
\Sigma^{\infty}(Y^N)$. The isotropy groups of a based $G$-space are the
subgroups so that the fixed points are non-trivial, so it is natural
to consider a stable, homotopy invariant version: the {\em geometric
  isotropy} of a $G$-spectrum $X$ is defined by 
$$\GI (X):=\{ K \st \Phi^KX \not \simeq *\}. $$

Certain $G$-spaces are useful in picking out isotropy
information.  
The geometric isotropy of $\efp$ is $\cF$ and the geometric isotropy 
of $\etf$ is $\cF^c$,  the complement of $\cF$.


\subsection{Geometric fixed points.}
\label{subsec:geomfps}

We say that $X$ {\em lies over $N$} if every subgroup in $\GI (X)$
contains $N$. It is obvious for spaces $X$ that the inclusion $X^N
\lra X $ induces an equivalence $X^N\sm \etnotN  \simeq X \sm \etnotN $
and with a little care there is a generalization to the case when
$X$ is a spectrum.

The relationship between categorical fixed points and  geometric 
points is described in  \cite[II.9]{LMS(M)}.  It follows from the
geometric fixed point Whitehead Theorem that $X$ lies over $N$ if and only if 
$$X\simeq X \sm \etnotN ,$$
and that if $Y$ lies over $N$ then 
$$[X,Y]^G\cong [\Phi^N X, \Phi^NY]^{G/N} .$$

\section{Models of spectra over $N$}
\label{sec:GspectraoverN}
Some of the results in this section are well-known, and we record them
here partly to prepare the way for their module counterparts in
Section \ref{sec:RmodoverN}.

\subsection{Models via localization}
We recall two models for the homotopy category of spectra over
$N$ and their equivalence.

 Choose a $G$-space $\etnotN$, and without change in notation we take
a bifibrant replacement of its suspension spectrum. We
consider the class $\PNequiv$ of morphisms $f:X\lra Y$ so that $
\etnotN \sm f$ is an equivalence; these are called {\em equivalences
over $N$}. 

\begin{construction} \cite[IV.6]{mm}
We form the {\em local model} as the (left) Bousfield localization 
$$\GspectraoverN :=L_{\PNequiv}\mbox{$G$-spectra}. $$
The weak equivalences are the equivalences over $N$ and the
cofibrations are those in the underlying category of $G$-spectra. 
\end{construction}

The inflation functor and its right adjoint, the Lewis-May fixed point
functor are familiar in this context. 

\begin{prop} \cite[VI.5.3]{mm}
Composing the Quillen adjunction 
$$\RLadjunction{(\cdot )^N}{\mbox{$G$-spectra}}
{\mbox{$G/N$-spectra}}{\infl}$$
with localization, we obtain a Quillen equivalence
$$\mbox{$\GspectraoverN$}\simeq \mbox{$G/N$-spectra} .\qqed$$
\end{prop}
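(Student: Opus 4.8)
The plan is to verify the two hypotheses needed to upgrade the Quillen adjunction between the localized category $\GspectraoverN$ and $\GHspectra$ to a Quillen equivalence: that the derived unit is a weak equivalence on all objects of the source, and that the derived counit is a weak equivalence on all objects of the target. Since the left adjoint here is inflation $\infl$ and the right adjoint is the Lewis-May fixed point functor $(\cdot)^N$, and since the weak equivalences upstairs are the $\PNequiv$-equivalences (equivalences after smashing with $\etnotN$), I would phrase everything in terms of smashing with $\etnotN$.

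First I would recall the basic identities from Subsection \ref{subsec:geomfps}: a $G$-spectrum $X$ lies over $N$ if and only if $X \simeq X \sm \etnotN$, and for $Y$ lying over $N$ one has $[X,Y]^G \cong [\Phi^N X, \Phi^N Y]^{G/N}$. The key point is that after Bousfield localization at $\PNequiv$, a $G$-spectrum is fibrant only if it lies over $N$ (its localization is $X \sm \etnotN$ up to equivalence), and that inflation $\infl Z$ of a $G/N$-spectrum automatically lies over $N$ since its geometric isotropy consists of subgroups containing $N$. Moreover, the geometric fixed point functor $\Phi^N$ on the image of inflation, and the Lewis-May categorical fixed point functor evaluated on spectra lying over $N$, agree up to natural equivalence with the identity after precomposing/postcomposing with $\infl$; more precisely $(\infl Z)^N \simeq Z$ naturally (the counit of the Lewis-May adjunction is an equivalence on inflated spectra), and $\Phi^N(X \sm \etnotN) \simeq \Phi^N X$ with $\Phi^N$ detecting all homotopy information of $X \sm \etnotN$.

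Then I would check the derived counit: for a cofibrant $G/N$-spectrum $Z$, the derived counit is $(\infl Z)^N \to Z$ (after fibrant replacement of $\infl Z$ in the localized model structure, which does not change it up to $\PNequiv$-equivalence since $\infl Z$ already lies over $N$). This map is the Lewis-May counit, which is known to be an equivalence on inflated spectra by \cite[V.3.x]{mm} (the inflation--fixed-point adjunction restricted to the image of inflation is an equivalence onto spectra over $N$). For the derived unit: given a cofibrant $G$-spectrum $X$, the derived unit is $X \to \infl((X^f)^N)$ where $X^f$ is a localized-fibrant replacement, so $X^f \simeq X \sm \etnotN$. I need this to be a $\PNequiv$-equivalence, i.e.\ an equivalence after $\sm \etnotN$. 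Since both source and target lie over $N$ after smashing with $\etnotN$, and since $\Phi^N$ detects equivalences between spectra over $N$, it suffices to show $\Phi^N$ of the unit is an equivalence; but $\Phi^N(\infl W) \simeq W$ naturally, so $\Phi^N$ of the unit becomes the identity-up-to-equivalence map $\Phi^N X \to (X\sm\etnotN)^N$ composed with the comparison, which reduces to the statement that $\Phi^N X \simeq \Phi^N(X \sm \etnotN) \simeq (X \sm \etnotN)^N$ for $X$ a spectrum already lying over $N$ — exactly the content of \cite[II.9]{LMS(M)} recalled above.

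The main obstacle I anticipate is bookkeeping the fibrant replacements carefully: one must be sure that localized-fibrant replacement of $X$ is genuinely modeled by $X \sm \etnotN$ (up to underlying equivalence), that the Lewis-May fixed point functor is homotopically well-behaved on such fibrant objects (so the derived right adjoint is computed correctly), and that the natural identifications $\Phi^N \infl \simeq \mathrm{id}$ and $(\infl(-))^N \simeq \mathrm{id}$ are compatible with the adjunction unit and counit rather than merely abstract equivalences. Once these compatibilities are pinned down — which is precisely what \cite[VI.5.3]{mm} supplies — the two Quillen-equivalence conditions follow formally, and composing with the evident Quillen equivalence between $\GspectraoverN$ and the original $G$-spectra localized at $\PNequiv$ gives the result.
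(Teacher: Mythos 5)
The paper itself offers no argument for this proposition: it is quoted directly from \cite[VI.5.3]{mm}, so the comparison is with your sketch alone, and the sketch contains a genuine error. You assert that $\infl Z$ ``automatically lies over $N$'', and you use this to argue that no localized fibrant replacement is needed before applying Lewis--May fixed points, so that $Z$ is recovered because ``the inflation--fixed-point adjunction restricted to the image of inflation is an equivalence''. Both claims are false. The underlying nonequivariant spectrum of $\infl Z$ is that of $Z$, so the trivial subgroup lies in $\GI(\infl Z)$ whenever $Z$ is nonequivariantly nontrivial; for instance with $N=G$ and $Z=S^0$ the inflation is the genuine $G$-sphere, whose geometric isotropy is \emph{all} subgroups. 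Correspondingly the derived Lewis--May unit $Z\to(\infl Z)^N$ is not an equivalence: for finite $G$, tom~Dieck splitting gives $(\infl S^0)^G\simeq\bigvee_{(H)}\Sigma^{\infty}BW_G(H)_+$, of which $Z=S^0$ is only one wedge summand --- exactly the phenomenon Section \ref{sec:sphere} of the paper turns on, and the reason the localization is essential rather than cosmetic. (You have also interchanged unit and counit throughout: with $\infl$ the left Quillen functor into $\GspectraoverN$, the map to check for $G/N$-spectra $Z$ is the derived unit from $Z$ to the Lewis--May fixed points of a localized-fibrant replacement of $\infl Z$, and the map for local $G$-spectra $X$ is the derived counit $\infl(X^N)\to X$.)

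The repair uses ingredients you already deploy in the other half of your argument: fibrant replacement in $\GspectraoverN$ is modelled up to equivalence by smashing with $\etnotN$, so the derived right adjoint computes \emph{geometric} fixed points, $((\infl Z)\sm\etnotN)^N\simeq\Phi^N(\infl Z)$, and the unit condition then follows from the standard equivalence $\Phi^N\infl Z\simeq Z$ (see \cite[V.4]{mm} and \cite[II.9]{LMS(M)}, cf.\ \S\ref{subsec:geomfps}), not from any property of the unlocalized Lewis--May adjunction on inflated spectra. With that substitution, and with your (correct) observation that $\Phi^N$ detects equivalences between spectra lying over $N$ for the counit direction, the sketch becomes the standard proof; as written, however, the key step rests on two false statements.
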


\subsection{Models via modules}
\label{subsec:modulemodel}

There is a tempting approach to the category of spectra over $N$ by
viewing them as modules over a ring. However, there is some need for
caution. 

First note that $\etnotN$ is a ring spectrum up to homotopy. 
A $G$-spectrum is an $\etnotN$-module up to homotopy if and only if 
it lies over $N$. Similarly, any map of $G$-spectra over $N$ is compatible up to
homotopy with the homotopy multiplication of $\etnotN$.  Thus the
category of $\etnotN$-modules looks like another model for  the category of $G$-spectra
over $N$. The homotopy category of this category of modules is not
obviously triangulated. 

However we can tighten up the structure. To begin, construct $\etnotN$ as a localization of the
ring spectrum $S^0$. Accordingly, it admits the structure of an
associative ring $G$-spectrum \cite{ekmm} and we may consider  its
category of modules. 
%
%
%
Denote restriction of scalars along the
map of ring spectra $l: S^0\lra \etnotN$ by $l^*$.  Its right adjoint is the
coextension of scalars functor 
$$l_!(M)=F(\etnotN,  M).$$
 Together these give  a Quillen adjunction 
$$\adjunction{l^*}{\mbox{$\etnotN$-mod-$G$-spectra}}{\mbox{$S^0$-mod-$G$-spectra}}{l_!}$$
of module categories. This provides another model for $G$-spectra over
$N$.

\begin{prop}
The restriction and coextension of scalars functors induce a Quillen equivalence
$$\mbox{$\etnotN$-mod-$G$-spectra}\simeq \mbox{$G$-spectra}/N.$$
\end{prop}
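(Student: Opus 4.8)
The plan is to verify that the Quillen adjunction $(l^*, l_!)$ relating $\etnotN$-modules to $S^0$-modules ($=G$-spectra) descends to a Quillen equivalence onto the localized category $\mbox{$G$-spectra}/N$, and then to identify the two resulting models. First I would observe that the homotopy category of $\etnotN$-mod-$G$-spectra is generated by $\etnotN$ itself (as a module over itself), and under $l^*$ this goes to the underlying $G$-spectrum $\etnotN \simeq \etnotN\sm \etnotN$, which is cofibrant-fibrantly modeled by the bifibrant replacement of $\etnotN$ chosen at the start of the subsection. The key homotopical input is the characterization recalled just above the statement: a $G$-spectrum lies over $N$ iff it is equivalent to $X\sm\etnotN$, together with the smashing nature of the localization $l\colon S^0\to\etnotN$ (it is built as a Bousfield localization of the sphere, so $\etnotN\sm(\cdot)$ is smashing).

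Next I would check compatibility with the localization. Since $l$ is a smashing localization, the composite $S^0\mbox{-mod-}G\mbox{-spectra}\xrightarrow{\etnotN\sm(\cdot)} \etnotN\mbox{-mod-}G\mbox{-spectra}\xrightarrow{l^*} S^0\mbox{-mod-}G\mbox{-spectra}$ is naturally equivalent to $\etnotN\sm(\cdot)$, which is exactly the localization functor presenting $\mbox{$G$-spectra}/N$; and a map $f$ is an equivalence over $N$ (i.e. $\etnotN\sm f$ an equivalence) precisely when $\etnotN\sm_{S^0} f$ is a weak equivalence of $\etnotN$-modules. This shows the adjunction $(l^*, l_!)$ factors through the Bousfield localization: $l^*$ sends weak equivalences of $\etnotN$-modules to equivalences over $N$, hence is left Quillen as a functor $\etnotN\mbox{-mod-}G\mbox{-spectra}\to \mbox{$G$-spectra}/N$. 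Then I would verify the unit and counit: the derived counit $l^* l_! M \to M$ is, for $M$ lying over $N$, the map $F(\etnotN,M)\wedge\etnotN$-flavored comparison which is an equivalence because $M\simeq M\sm\etnotN$; the derived unit $X \to l_! l^* X = F(\etnotN, \etnotN\sm_{S^0} X)$ becomes, after localizing the source, an equivalence by the smashing property (the localization map $X\to X\sm\etnotN$ is exactly the $\etnotN$-localization). Both checks reduce to the standard fact that for a smashing localization the category of local objects is equivalent to the category of modules over the localized ring, applied $G$-equivariantly with $R=\etnotN$.

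The main obstacle I anticipate is purely point-set: making sure the ``ring spectrum up to homotopy'' $\etnotN$ is rigidified to a genuine associative (even commutative, via \cite{ekmm}) $S^0$-algebra $G$-spectrum with the correct homotopy type, that the model structure on its modules exists and is cellular/stable as required, and that the bifibrant replacement chosen does not disturb the smashing identification — all of which is handled by the Bousfield-localization-of-$S^0$ construction invoked in the text, so the argument is really just the assembly of these inputs. A minor additional point is to confirm that the localization $\mbox{$G$-spectra}/N = L_{\PNequiv}\mbox{$G$-spectra}$ has the same weak equivalences as those created by $l^*$, which follows since both are detected by smashing with $\etnotN$; with that in hand the Quillen equivalence is immediate.
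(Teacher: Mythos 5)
Your proposal is essentially the paper's own argument: the paper proves the statement by checking the composite Quillen adjunction into $L_{\Phi^N}(\mbox{$G$-spectra})$ directly on a cofibrant $\etnotN$-module $C$ and a fibrant (local) $L$, using exactly the two inputs you invoke --- $C\simeq C\sm\etnotN$ for cofibrant modules and $F(\etnotN,L)\simeq L \simeq L\sm\etnotN$ for local $L$ (via \cite[IV.6.13]{mm}) --- so your smashing-localization formulation of the unit/counit check is the same verification in different words. Two incidental slips, neither load-bearing: $\etnotN$ alone does not generate the equivariant module category (one needs the cells $\etnotN\sm G/H_+$, as the paper stresses for general $R$), and $l^*X$ is the underlying $G$-spectrum of $X$ rather than $\etnotN\sm_{S^0}X$, so the unit is $X\to F(\etnotN,X)$, whose derived form $X\to F(\etnotN,X\sm\etnotN)$ is what your smashing argument actually handles.
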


\begin{proof}
Since the left Quillen functors are in the right order to be composed, we need only check that for
any cofibrant $\etnotN$-module $C$ and any  $G$-spectrum  $L$, fibrant
in $G$-spectra over $N$, that a map $C\lra l_!L$ is an equivalence in
$\etnotN$-modules if and only if $C\lra L$ is an equivalence over
$N$. Decoding this, we need to show that, in the ambient category of
$G$-spectra, $C\lra F(\etnotN, L)$ is an equivalence if and only if 
$C\sm \etnotN \lra L\sm \etnotN $
is an equivalence. 

Consider the diagram
$$\diagram 
C\sm \etnotN \rto &L\sm \etnotN\\
&F(S^0, L)\uto\\
C\uuto \rto & F(\etnotN, L) \uto
\enddiagram$$
Since $C$ is a cofibrant $\etnotN$-module, the left hand vertical is an
equivalence. Since $L$ is fibrant, \cite[IV.6.13]{mm} shows the right
hand verticals are both equivalences. This gives the desired statement
that  the top horizontal
is an equivalence if and only if the bottom horizontal is an
equivalence. 
\end{proof}

The disappointment is that although  $\etnotN$ is a commutative ring up to
homotopy,  no model for it is  a strictly commutative ring
for reasons described by \cite{McClure}   (or because it is
incompatible with the existence of multiplicative norm maps). These
phenomena are studied systematically in  \cite{Hill.Hopkins}.

\section{Fixed point equivalences}

In this section we explain how to apply the Cellularization Principle \ref{thm:cellprin} to obtain 
interesting  equivalences from the fixed point adjunction
$$
\RLadjunction
{\Psi^N}
{\RmodG}
{\RNmod}
{R\tensor_{\infl R^N}(\cdot)} $$
 on module categories. 
  
\subsection{Towards Quillen equivalences.}\label{subsec:q.eq}
In somewhat simplified notation, consider  the unit and 
counit of the derived adjunction
$$\eta: Y\lra (R\tensor_{\infl R^N}\infl Y)^N 
\mbox{ and }
\eps: R\tensor_{\infl R^N}(\infl X^N)\lra X. $$
We see  that $\eta$ is an equivalence for $Y=R^N$ and that
$\eps$ is an equivalence for $X=R$, so that by the Cellularization Principle \ref{thm:cellprin} 
we always have an
equivalence
$$\RcellRmodG \simeq \RNcellRNmodGN .$$
If $G$ is trivial, $R$ generates the category of $R$-modules, but if 
$G$ is not trivial, $R$ is not usually a generator of the category of equivariant
$R$-modules since we also need the modules of the form $R\sm G/H_+$
for proper subgroups $H$. Similar remarks apply to $R^N$-modules when
$G/N$ is non-trivial.

\subsection{Thick category arguments}
Having established the interest in when categories of equivariant
modules are generated by the ring, we need some more terminology. 

If $R$ {\em does} build every $R$-module, we refer to the module category as {\em
 monogenic}. There are surprisingly many examples of monogenic
equivariant module categories, and in practice the process of building
the modules $R\sm G/H_+$ is of a simple form. 
\begin{defn}
(i) Two $G$-spectra $X$ and $Y$ are {\em $R$-equivalent} ($X\sim_R Y$)
 if there is an equivalence $R\sm X \simeq R\sm Y$ of $R$-modules.

(ii) A $G$-spectrum $X$ is an {\em $R$-retract} of $Y$ if $R\sm X $ is
a retract of $R\sm Y$ as $R$-modules.

(iii) A collection $\cC$ of $G$-spectra is {\em closed under $R$-triangles}
if whenever there is a cofibre sequence $R\sm X \lra R\sm Y\lra R\sm
Z$
of $R$-modules then if two of $X, Y$ and $Z$ lie in $\cC$, so does the
third. 

(iii) A class $\cC$ of $G$-spectra is {\em $R$-thick} if it is closed 
under $R$-equivalence, $R$-retracts and completing $R$-triangles. We
say that $X$ {\em finitely $R$-builds} $Y$ ($X\finbuilds_R Y$) if $Y$ is in
the $R$-thick subcategory generated by $X$.

(iv) A class $\cC$ of $G$-spectra is  {\em $R$-localizing} if it is 
$R$-thick and closed under arbitrary coproducts.  We
say that $X$ {\em $R$-builds} $Y$ ($X\builds_R Y$) if $Y$ is in
the $R$-localizing subcategory generated by $X$.

(v) We say that the category of $R$-modules is {\em strongly
  monogenic} if the $R$-localizing subcategory generated by $S^0$ is
the entire  category of $G$-spectra.
\end{defn}

We note that to show the category of $R$-modules is strongly monogenic,
we need only show that $G/H_+$ is $R$-built by $S^0$ for all subgroups
 $H$. The following observation explains the purpose of the definitions.

\begin{lemma}  If the category of $R$-modules is strongly monogenic, then $R$-cellularization
has no effect on  $\RmodG$, and we have a Quillen equivalence
$$\RcellRmodG \simeq \RmodG.\qqed $$
\end{lemma}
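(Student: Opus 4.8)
The plan is to observe that the asserted Quillen equivalence is exactly the statement that $R$-cellularization of $\RmodG$ is trivial, to translate triviality of a cellularization into a statement about localizing subcategories of $\Ho(\RmodG)$, and then to deduce that statement from strong monogenicity by transporting the hypothesis along the free-module functor $F=R\sm(\cdot)$.

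First I would recall the relevant general fact from the theory of cellularization (right Bousfield localization) \cite{hh}, in the stable form used in \cite{GScell}: for a right proper, stable, cellular model category $\M$ and a set of cells $\cK$, the canonical Quillen adjunction between the $\cK$-cellularization of $\M$ and $\M$ is a Quillen equivalence precisely when every object of $\Ho(\M)$ lies in the localizing (triangulated, coproduct-closed) subcategory $\langle\cK\rangle$ that $\cK$ generates; equivalently, exactly when $\cK$-cellularization kills no object. Applied with $\M=\RmodG$ and $\cK=\{R\}$, the lemma reduces to showing that $\langle R\rangle=\Ho(\RmodG)$.

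To prove this, I would first reduce to free modules: every $R$-module is weakly equivalent to a cell $R$-module, and a cell $R$-module is built out of the standard cells $R\sm S^n\sm G/H_+$ \cite{mm}; since $S^n\sm G/H_+$ is a $G$-spectrum these are free modules $R\sm W$, so $\langle\{R\sm W\st W\in\mbox{$G$-spectra}\}\rangle=\Ho(\RmodG)$, and indeed it suffices to show that $R$ builds $R\sm G/H_+$ in $\Ho(\RmodG)$ for every subgroup $H$. Now I would invoke the hypothesis: by definition $S^0\builds_R G/H_+$ means that $G/H_+$ lies in the $R$-localizing subcategory of $G$-spectra generated by $S^0$, where the closure operations (passing to $R$-equivalent objects, $R$-retracts, third terms of $R$-triangles, and coproducts) are precisely those visible after applying $F=R\sm(\cdot)$. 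Since $F$ is a left Quillen functor between stable model categories, its derived functor is exact and preserves coproducts, so $F$ carries the $R$-localizing subcategory generated by $S^0$ into the honest localizing subcategory $\langle F(S^0)\rangle=\langle R\rangle$ of $\Ho(\RmodG)$. Hence $R\sm G/H_+=F(G/H_+)\in\langle R\rangle$ for all $H$; combined with the reduction above this gives $\langle R\rangle=\Ho(\RmodG)$, and therefore $\RcellRmodG\simeq\RmodG$.

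The one step needing genuine care is this last translation: one must verify that the ad hoc ``$R$-'' closure conditions built into the notion of ``$X\builds_R Y$'' match up, under the free-module functor, with the genuine triangulated and coproduct closures in the model category $\RmodG$ --- that is, that $F$ takes $R$-equivalences to equivalences, $R$-triangles to cofibre triangles, $R$-retracts to retracts, and coproducts to coproducts. This is immediate from the definitions together with exactness of the left derived functor of $F$, but it is the point at which the two senses of ``building'' are identified; everything else is either the definition of cellularization or the standard cell structure on module spectra.
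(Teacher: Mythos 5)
Your proof is correct and takes essentially the same route as the paper: both arguments come down to showing that strong monogenicity forces $R$ to generate (equivalently, detect weak equivalences in) the homotopy category of $\RmodG$, so that the $R$-cellular equivalences coincide with the underlying weak equivalences and the cellularization is trivial. The only difference is one of packaging: the paper compresses the generation/detection step into a citation of \cite[2.2.1]{ss2}, whereas you make explicit the transport of the $R$-building hypothesis along $R\sm(\cdot)$ and the reduction to the cells $R\sm G/H_+$, which is exactly the verification implicit in that citation.
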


\begin{proof} 
By~\cite[2.2.1]{ss2}, the category of $R$-modules is strongly monogenic if and only if $R$ detects weak equivalences.  Thus, here the $R$-cellular equivalences and the underlying equivalences agree.
\end{proof}

Combining this with the fixed point $R$-module Quillen adjunction from Section~\ref{subsec:q.eq},
we obtain a statement we will use repeatedly. 

\begin{cor}
\label{cor:strongmonsuff}
If $R$ is strongly monogenic we have
$$\RmodG \simeq \RNcellRNmodGN ,  $$
and if in addition $R^N$ is strongly monogenic (for example if $N=G$), then
$$\RmodG \simeq \RNmod . \qqed $$
\end{cor}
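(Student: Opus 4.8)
The plan is to deduce Corollary~\ref{cor:strongmonsuff} directly by assembling the three facts already established in this section. First I would record the unconditional equivalence from Subsection~\ref{subsec:q.eq}: applying the Cellularization Principle~\ref{thm:cellprin} to the fixed point adjunction $(\Psi^N, R\tensor_{\infl R^N}(\cdot))$ with cells $\cK = \{R\sm G/H_+\}$ (so that $FQ\cK$ is the corresponding set of $R^N$-modules), and using that the derived unit is an equivalence on $R^N$ and the derived counit is an equivalence on $R$, gives $\RcellRmodG \simeq \RNcellRNmodGN$. The only point needing a line of justification here is that the cells are small and stable, which holds because $R\sm G/H_+$ is a compact generator-type object in the stable model category of $R$-modules; I would cite the relevant small-object discussion rather than belabor it.

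Next I would invoke the preceding Lemma: if $R$ is strongly monogenic then $R$-cellularization has no effect on $\RmodG$, so $\RcellRmodG \simeq \RmodG$. Splicing this into the equivalence of the previous paragraph yields
\[
\RmodG \simeq \RcellRmodG \simeq \RNcellRNmodGN,
\]
which is the first assertion. For the second assertion, I would apply the same Lemma one level down: if $R^N$ is also strongly monogenic, then $R^N$-cellularization has no effect on $\RNmod$, so $\RNcellRNmodGN \simeq \RNmod$, and composing gives $\RmodG \simeq \RNmod$. The parenthetical remark that $N = G$ is an instance is immediate, since then $G/N$ is trivial and $R^N$ automatically generates its own module category (the $R^N$-localizing subcategory generated by $S^0$ is everything when the ambient group is trivial).

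There is essentially no main obstacle: the statement is a formal corollary, and the real content has already been carried in Theorem~\ref{thm:cellprin}, the Lemma relating strong monogenicity to detection of weak equivalences via~\cite[2.2.1]{ss2}, and the computation of the derived unit and counit on the cells $R$ and $R^N$. If anything, the step requiring the most care is making sure the two applications of the Lemma are to the correct cellularizations on the correct sides — that is, that the $FQ\cK$-cellularization of $\RNmod$ is precisely $R^N$-cellularization, which follows because $FQ(R\sm G/H_+) \simeq R^N \sm (G/N)/(\bar H)_+$ ranges over a generating set for $R^N$-modules as $H$ ranges over subgroups containing $N$. I would state this identification explicitly and then conclude.
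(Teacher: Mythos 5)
Your three-step chain $\RmodG \simeq \RcellRmodG \simeq \RNcellRNmodGN \simeq \RNmod$ -- the unconditional equivalence of Subsection \ref{subsec:q.eq}, then the preceding Lemma applied to $R$, then (when $R^N$ is also strongly monogenic) the same Lemma applied to $R^N$, with the $N=G$ case trivial because the ambient group for $R^G$-modules is then trivial -- is exactly the paper's argument; the corollary carries no separate proof there precisely because this splice is all that is needed.

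However, your bookkeeping of the cells is wrong in two places, and taken literally those steps would fail. The unconditional equivalence $\RcellRmodG \simeq \RNcellRNmodGN$ is the cellularization at the \emph{single} cells $R$ and $R^N$: one applies Theorem \ref{thm:cellprin} with $\cK=\{R^N\}$ (or $\mcL=\{R\}$), and the only inputs are that the derived unit is an equivalence at $R^N$ and the derived counit at $R$. With your declared cell set $\cK=\{R\sm G/H_+\}$ the hypotheses are not unconditionally available -- whether the counit is an equivalence on all the cells $R\sm G/H_+$ is precisely the substantive question that strong monogenicity (or, later, Eilenberg-Moore convergence) is introduced to settle -- and the resulting cellularization would not be the category denoted $\RcellRmodG$. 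Likewise the identification in your final paragraph, $FQ(R\sm G/H_+)\simeq R^N\sm \Gbar/\Hbar_+$, cannot be right as stated: $F$ is the left adjoint $R\tensor_{\infl R^N}\infl(\cdot)$, whose domain is $R^N$-module $G/N$-spectra, so it cannot be evaluated on $R\sm G/H_+$; and the right adjoint $\Psi^N$ (Lewis-May fixed points) does not carry $R\sm G/H_+$ to $R^N\sm \Gbar/\Hbar_+$ (already for $R=S^0$ and $H=N=G$ this would contradict the Segal-tom Dieck splitting). The correct statement in this vicinity -- that the left adjoint sends the generators $R^N\sm \Gbar/\Hbar_+$ to $R\sm G/H_+$ for $H\supseteq N$ -- is what the paper uses later, in the proof of Theorem \ref{thm:NfixedmodoverN}, and it is not needed for this corollary at all. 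Once you take the equivalence of Subsection \ref{subsec:q.eq} with its actual single cells, your two applications of the Lemma complete the proof exactly as in the paper.
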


\subsection{Strongly monogenic examples}
\label{subsec:strongmonogenic}
Perhaps the first important example of $R$-equivalence is given by
Thom isomorphisms: if $R$ is complex orientable then it is complex
stable, that is, for any complex
representation $V$, we have $R\sm S^V\simeq R\sm S^{|V|}$. So, in particular,
$S^V\sim_R S^{|V|}$, where $|V|$ denotes the underlying vector space
of $V$ with trivial $G$-action. 

\begin{lemma}
\label{lem:RthickT}
If $G$ is a torus and $R$ is complex stable then the category of
$R$-modules is strongly monogenic. 
\end{lemma}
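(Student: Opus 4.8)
The plan is to reduce to showing that the $R$-localizing subcategory generated by $S^0$ in $G$-spectra is everything, as noted in the remark preceding the lemma: it suffices to prove that $S^0$ $R$-builds $G/H_+$ for every closed subgroup $H$ of the torus $G$. I would proceed by induction on the codimension of $H$, the base case $H = G$ being trivial since $G/G_+ = S^0$. For the inductive step, pick $H$ properly contained in $G$ and choose a one-dimensional complex representation $V$ of $G$ whose kernel $K$ contains $H$ with $K/H$ positive-dimensional (possible since $G$ is a torus and $H \neq G$). The isotropy cofibre sequence of based $G$-spaces
\[ S(\infty V)_+ \lra S^0 \lra S^{\infty V} \]
is the standard tool: here $S^{\infty V} = \widetilde{E}[\not\supseteq K]$ has geometric isotropy exactly the subgroups not containing $K$, while $S(\infty V)$ is built from cells $G/J_+$ with $J \subsetneq K$, in fact a colimit of the $S(nV)_+$, each of which is a finite $G$-CW complex with cells $G/J_+$, $J$ containing no ``full'' copy of $K$ — more precisely $J \cap K \neq K$.

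The key input is complex stability: smashing the cofibre sequence $S(nV)_+ \to S(n+1)V_+ \to S^{(n+1)V}/S^{nV} = G/K_+ \wedge S^{(n+1)|V|}$ with $R$ and using $R \wedge S^{(n+1)V} \simeq R \wedge S^{(n+1)|V|}$, one sees that after smashing with $R$ the space $S(\infty V)_+$ is $R$-built from the spheres $G/J_+$ with $J \subsetneq K$ (using that $R \wedge G/K_+ \wedge S^{m}$ sits in $R$-triangles built from such cells, by an inner induction that again invokes a representation with kernel strictly between $J$ and $K$). Granting this, the cofibre sequence above shows that modulo things $R$-built from $G/J_+$ with $J \subsetneq K$, the object $R \wedge S^0$ is $R$-equivalent to $R \wedge S^{\infty V}$, so $S^0$ $R$-builds $S^{\infty V}$; but $S^{\infty V}$ is a ring up to homotopy carrying all $G$-spectra over $K$, and in particular $S^0 \wedge S^{\infty V}$ together with the cells $G/J_+$ ($J \subsetneq K$) $R$-builds $G/K_+$, hence by the outer induction hypothesis applied to each such $J$ (all of strictly smaller codimension than $H$? — no, of the {\em same} or smaller; see below) one recovers $G/H_+$.

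The bookkeeping obstacle — and the step I expect to be delicate — is making the two nested inductions close properly: the subgroups $J \subsetneq K$ appearing as cells of $S(\infty V)$ need not have larger codimension than $H$, so the naive induction on codimension of $H$ is not obviously well-founded. The fix is to induct instead on the subgroup $K$ (equivalently on $\dim K$, which {\em does} strictly increase since $K \supsetneq H$ forces $\dim K > \dim H$ when we choose $V$ with $K/H$ positive-dimensional): one proves by downward induction on $\dim K$ the statement ``$S^0$ $R$-builds $G/K_+$'', where in treating $K$ one is allowed to use $G/J_+$ only for $J$ with $\dim J > \dim K$. Complex stability is exactly what lets $S(\infty V)$ — whose isotropy subgroups $J$ satisfy $J \cap K \subsetneq K$, hence $\dim(J\cap K) < \dim K$ and one further reduction brings in only higher-dimensional pieces — be absorbed. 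Assembling this carefully, using $S^{\infty V} \simeq EG/K_+ \wedge \widetilde{E}[\not\supseteq K]$ and the equivalence $X \simeq X \wedge \widetilde{E}[\not\supseteq K]$ for $X$ lying over $K$ from Section~\ref{subsec:geomfps}, yields $S^0 \builds_R G/H_+$ for all $H$, hence strong monogenicity.
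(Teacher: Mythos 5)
Your route through the infinite sphere $S(\infty V)$ and isotropy separation rests on a structural claim that is false, and this is where the argument genuinely breaks. For a one--dimensional representation $V=\alpha$ with kernel $K$, the group $G$ acts on $S(nV)$ through the scalar action of $U(1)$, which is free on unit spheres; hence \emph{every} point of $S(nV)$ has stabilizer exactly $K$, so $S(\infty V)$ is built entirely from cells $G/K_+$, not from cells $G/J_+$ with $J\cap K\neq K$. Likewise $S^{\infty V}$ is $\widetilde{E}\cF[K]$ for the family of subgroups \emph{contained in} $K$ (its geometric isotropy is the subgroups not contained in $K$), not $\widetilde{E}[\not\supseteq K]$. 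As a result the pivotal step --- that modulo smaller cells $S^0$ $R$-builds $S^{\infty V}$, and that $S^{\infty V}$ together with smaller cells then $R$-builds $G/K_+$ --- is unsupported and in fact circular: the cofibration $S(\infty V)_+\lra S^0\lra S^{\infty V}$ has first term an infinite complex made of $G/K_+$ cells, and one cannot solve back for $G/K_+$ from it by a thick or localizing subcategory argument. The double induction whose well-foundedness you yourself flag is never closed, so the proposal does not prove the lemma.

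The missing idea is that no infinite-dimensional construction is needed: for a nontrivial character $\alpha:G\lra U(1)$ with kernel $K$, the \emph{finite} unit sphere $S(\alpha)$ is already a single orbit, $S(\alpha)\cong G/K$ equivariantly (the action is transitive through $\alpha$ with stabilizer $K$). Thus the cofibre sequence $G/K_+=S(\alpha)_+\lra S^0\lra S^{\alpha}$, combined with the Thom isomorphism $S^{\alpha}\sim_R S^2$ from complex stability, exhibits $G/K_+$ in the $R$-thick subcategory generated by $S^0$ in one step. The kernels arising this way are exactly the subgroups $K=H\times C$ with $H$ a rank $r-1$ subtorus and $C$ finite cyclic, and an arbitrary closed subgroup $L$ satisfies $G/L\cong G/K_1\times\cdots\times G/K_r$ for suitable such $K_i$; smashing at most $r$ of these cofibre sequences together (applying the Thom isomorphism each time) shows $G/L_+$ is finitely $R$-built from $S^0$, whence strong monogenicity. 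This finite argument replaces your induction on dimension, the universal spaces, and the $S(\infty V)$ machinery entirely.
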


\begin{proof}
It suffices to show $S^0$ $R$-builds  all spectra $G/H_+$.

The proof is built from one cofibre sequence for the circle group and the Thom
isomorphism for $R$. Suppose first that $\alpha : G\lra U(1) $ is a
non-trivial representation, and write $\Gbar = G/K$ where $K=\ker (\alpha )$ for
the relevant circle quotient, noting that there is an equivariant 
homeomorphism $S(\alpha )=\Gbar$. The 
important cofibre sequence is  
$$\Gbar_+=S(\alpha)_+\lra S^0 \lra S^{\alpha}. $$
The Thom isomorphism shows $S^{\alpha}\sim_R S^2$, and the cofibre
sequence shows $\Gbar_+$ is in the $R$-thick subcategory generated
by $S^0$. 

The subgroups occurring as kernels $K$ as in the previous 
paragraph are precisely those of the form 
$K=H\times C$ where $H$ is a rank $r-1$ torus and $C$ is 
a finite cyclic group. Finally we note that for an arbitrary 
subgroup $L$ we have 
$$G/L=G/K_1 \times G/K_2\times \cdots \times G/K_r$$
for suitable $K_i$ occurring as kernels. 

Using the above argument up to $r$ times we see $G/L _+$ is in the
$R$-thick subcategory generated by $S^0$.
\end{proof}

When $G$ is not a torus we need additional restrictions to permit a
similar argument. The flavour is that the group acts trivially on the
coefficients, and there are many examples starting with mod $p$ Borel 
cohomology when $G$ is a $p$-group. 

\begin{lemma}
\label{lem:twogroups}
If $G$ is a $2$-group and for all subgroups $H$ of $G$, $R^H_*(\cdot )$ has Thom isomorphisms for all
real representations, then the category of $R$-modules is strongly monogenic. 
\end{lemma}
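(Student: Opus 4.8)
The plan is to follow the same template as the torus case in Lemma~\ref{lem:RthickT}, namely to show that $S^0$ $R$-builds $G/H_+$ for every subgroup $H$, and then invoke the characterization (via \cite[2.2.1]{ss2}) that strong monogenicity is equivalent to $R$ detecting weak equivalences. So the real content is an induction showing each $G/H_+$ lies in the $R$-localizing (in fact $R$-thick) subcategory generated by $S^0$.

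\medskip

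The inductive device replacing the circle cofibre sequence is the following. For a $2$-group $G$ and a proper subgroup $H$, one wants a cofibre sequence built out of cells of the form $G/K_+$ with $K\supsetneq H$ (or at least $K$ strictly larger in some filtration) together with $R$-equivalences coming from the hypothesized Thom isomorphisms. The key point: since $G$ is a $2$-group, every proper subgroup is contained in a subgroup of index $2$, and an index-$2$ subgroup $H'\le G$ is the kernel of a nontrivial one-dimensional \emph{real} representation $\alpha: G\to O(1)=\{\pm 1\}$. For such an $\alpha$ we have an equivariant homeomorphism $S(\alpha)\cong G/H'$ (two points permuted by the $\Z/2$ quotient), giving the cofibre sequence
$$
G/H'_+ = S(\alpha)_+ \lra S^0 \lra S^{\alpha}.
$$
By hypothesis $R^G_*(\cdot)$ has a Thom isomorphism for the real representation $\alpha$, so $S^{\alpha}\sim_R S^1$, and the cofibre sequence then shows $G/H'_+$ lies in the $R$-thick subcategory generated by $S^0$. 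This handles all index-$2$ subgroups directly.

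\medskip

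To reach an arbitrary subgroup $H$ one then runs an induction on the index $[G:H]$ (equivalently on $|G/H|$, a power of $2$). Given $H$ of index $>2$, pick an intermediate subgroup $H < H' \le G$ with $[G:H']=2$ (exists since $G$ is a $2$-group, hence nilpotent, so proper subgroups are properly contained in their normalizers and in particular in index-$2$ subgroups). Working inside $H'$: the smash relation is compatible with restriction, and one applies the inductive hypothesis \emph{for the group $H'$} — here one uses that the Thom-isomorphism hypothesis was assumed for \emph{all} subgroups of $G$, so $R^{H'}_*$ also has Thom isomorphisms for all real $H'$-representations. This gives that $H'/H_+$ is $R$-built from $S^0$ as $H'$-spectra; inducing up along $H'\le G$ (smashing with $G/H'_+$, or rather using $G\ltimes_{H'}(-)$) and using the index-$2$ step to handle $G/H'_+$ itself, one concludes $G/H_+$ is $R$-built from $S^0$. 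Assembling all subgroups gives that the $R$-localizing subcategory generated by $S^0$ is all of $G$-spectra, i.e.\ $R$ is strongly monogenic.

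\medskip

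The main obstacle I expect is the bookkeeping in the induction-up step: one must check that ``$R$-builds'' is preserved under the change-of-groups functor $G\ltimes_{H'}(-)$ and that $G\ltimes_{H'}(H'/H_+)\simeq G/H_+$, and that the hypothesis transports correctly to subgroups (which it does, by assumption, since it is imposed for all $H\le G$). The existence of an index-$2$ overgroup for every proper subgroup — the one genuinely ``$2$-group'' input beyond the sign-representation trick — is the crucial group-theoretic fact, and the rest is the same assembly as in Lemma~\ref{lem:RthickT}.
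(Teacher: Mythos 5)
Your argument is correct and is essentially the paper's own proof: the paper also inducts so as to use the statement for a proper (maximal, index-$2$) subgroup $K\supseteq L$, writes $G/L_+\simeq G_+\sm_K K/L_+$ to induce the building up to $G$, and then handles $G/K_+$ itself via the sign representation $\xi:G\to O(1)$ with kernel $K$, the cofibre sequence $S(\xi)_+\to S^0\to S^\xi$, and the Thom isomorphism $S^\xi\sim_R S^1$. The only cosmetic difference is that the paper organizes the induction on the order of $G$ rather than on the index $[G:H]$, which is the cleaner way to justify invoking the result for the subgroup $H'$ as you do.
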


\begin{proof}
The essential ingredients in the proof are a cofibre sequence for the group of order $2$ and the Thom
isomorphism for $R$. 

We argue by induction on the order of $G$. The statement is immediate
for the group of order 1, and we may suppose by induction that the
statement has been proved for all proper subgroups of $G$.

Now consider $G$ itself. We must show that for all subgroups $L$ of
$G$, the $G$-space  $G/L_+$ is $R$-built from $R$. This is immediate
if $L=G$, and in other cases we may choose a maximal subgroup $K$ of $G$ so that 
$L\subseteq K \subseteq G$. First we observe that by induction $G/L_+$
is $R$-built from $G/K_+$.  Indeed, 
$G/L_+\simeq G_+\sm_KK/L_+$; by induction $K/L_+$ is $K$-equivariantly
$R$-built from $S^0$, and inducing up to $G$ we see  $G/L_+$ is $G$-equivariantly $R$-built
from $G/K_+$. It remains to show that $G/K_+$ is
$G$-equivariantly built from $S^0$.

Let  $\xi : G\lra O(1)$ be the representation with kernel $K$. Write $\Gbar = G/K$  for the
quotient of order $2$ and note that there is an equivariant homeomorphism
$S(\xi)=\Gbar$. 
The important cofibre sequence is  
$$\Gbar_+= S(\xi)_+\lra S^0 \lra S^{\xi},  $$
The Thom isomorphism shows $S^{\xi}\sim_R S^1$, and the first cofibre
sequence shows $\Gbar_+$ is in the $R$-thick subcategory generated
by $S^0$.
This shows $\Gbar_+$ is in the $R$-thick category generated by $S^0$
as required.

 \end{proof}

\begin{lemma}
\label{lem:pgroups}
If $G$ is a $p$-group, and for all subgroups $H$ of $G$,  $R_H^*(\cdot)$ is complex stable, and $N_G(H)$ acts
trivially on $R_H^*$  then the category of $R$-modules is strongly monogenic. 
\end{lemma}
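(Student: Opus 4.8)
The plan is to follow the same inductive template used in Lemmas~\ref{lem:twogroups} and~\ref{lem:pgroups}, replacing the order-$2$ cofibre sequence by the corresponding sphere bundle for a nontrivial complex character of a cyclic quotient of prime order. First I would reduce, exactly as before, to showing that $G/L_+$ is $R$-built from $S^0$ for every subgroup $L$. Arguing by induction on $|G|$, the case $L=G$ is trivial, and for $L\subsetneq G$ one chooses a subgroup $K$ with $L\subseteq K\subsetneq G$ and $K$ normal (possible since $G$ is a $p$-group, so it has a normal subgroup of each order dividing $|G|$); writing $G/L_+\simeq G_+\sm_K K/L_+$ and using the inductive hypothesis for $K$ together with the fact that inducing up preserves $R$-building, one reduces to showing $G/K_+$ is $R$-built from $S^0$. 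Choosing $K$ so that $G/K$ is cyclic of order $p$ (again possible for a $p$-group: take $K$ to be any subgroup of index $p$ containing the chosen $K$, or rather start by picking $K$ of index $p$) further reduces to the case where $\Gbar:=G/K$ is cyclic of order $p$.

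The heart of the argument is then this cyclic-of-order-$p$ case. Let $\alpha\colon G\lra U(1)$ be a faithful complex character of $\Gbar$ inflated to $G$, so $K=\ker(\alpha)$ and $S(\alpha)=\Gbar$ equivariantly. The cofibre sequence
$$\Gbar_+ = S(\alpha)_+\lra S^0\lra S^{\alpha}$$
together with the hypothesis that $R^H_*(\cdot)$ is complex stable for every subgroup $H$ (so that $S^{\alpha}\sim_R S^{2}$, using the Thom isomorphism $R\sm S^{\alpha}\simeq R\sm S^{|\alpha|}=R\sm S^2$) shows that $\Gbar_+ = G/K_+$ lies in the $R$-thick subcategory generated by $S^0$. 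Combined with the induction this proves that $G/L_+$ is $R$-built from $S^0$ for all $L$, hence by Corollary~\ref{cor:strongmonsuff}'s preliminary remark that the category of $R$-modules is strongly monogenic.

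The point where the normality hypothesis ``$N_G(H)$ acts trivially on $R_H^*$'' actually gets used, and where I expect the main subtlety to lie, is in making sense of the Thom isomorphism $S^{\alpha}\sim_R S^2$ \emph{equivariantly} and in the inductive step where one passes between $G$ and its subgroup $K$: the statement ``$R^H_*(\cdot)$ is complex stable'' must be interpreted as a statement about the $R$-modules $R\sm S^V$ in the ambient $G$-equivariant category (restricted to $H$), and for the sphere bundle of a complex character of a cyclic quotient this is exactly the kind of Thom class whose existence and $G$-equivariance is controlled by the Weyl-group action being trivial. So the real work is checking that ``complex stable for all $H$, with trivial Weyl action'' is precisely enough to run the Thom isomorphism for each of the characters $\alpha$ arising as one descends the subgroup lattice; once that bookkeeping is in place the homological part of the argument is formal and identical to Lemma~\ref{lem:twogroups}.
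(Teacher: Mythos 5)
Your reduction to the case of a maximal subgroup $K$ (so that $\Gbar=G/K$ is cyclic of order $p$) matches the paper, but the heart of your argument rests on a false identification: for a complex character $\alpha\colon G\lra U(1)$ with kernel $K$ of index $p$ in a \emph{finite} group, the unit sphere $S(\alpha)$ is a circle on which $\Gbar$ acts freely by rotation through $p$-th roots of unity; it is not the discrete space $\Gbar$. The equality $S(\alpha)=\Gbar$ holds only in the two situations already treated — a real character onto a quotient of order $2$ (Lemma \ref{lem:twogroups}) and a torus, where $G/K$ is itself a circle (Lemma \ref{lem:RthickT}). Consequently your cofibre sequence $\Gbar_+=S(\alpha)_+\lra S^0\lra S^{\alpha}$ is not available for odd $p$: the Thom isomorphism and the correct cofibre sequence $S(\alpha)_+\lra S^0\lra S^{\alpha}$ only show that $S(\alpha)_+$ is finitely $R$-built from $S^0$, and your argument never gets from $S(\alpha)_+$ to $G/K_+$.

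That missing step is exactly where the paper's proof differs from Lemma \ref{lem:twogroups} and where the hypothesis on the Weyl action enters — not, as you speculate, in making Thom classes equivariant. One uses the second, stable cofibre sequence
$$\Gbar_+ \stackrel{1-g}\lra \Gbar_+ \lra S(\alpha)_+$$
coming from the cellular filtration of the free $\Gbar$-circle. Because $\Gbar$ acts trivially on $R^K_*\cong R^G_*(\Gbar_+)$, the map $1-g$ induces zero, so the long exact sequence collapses to a short exact sequence $0\lra R^K_*\lra R^G_*(S(\alpha)_+)\lra \Sigma R^K_*\lra 0$ whose quotient is free over $R^K_*$; choosing a splitting gives $R\sm S(\alpha)_+\simeq R\sm\Gbar_+\vee \Sigma R\sm\Gbar_+$, so $\Gbar_+$ is an $R$-retract of $S(\alpha)_+$ and hence lies in the $R$-thick subcategory generated by $S^0$. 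Without this retract argument your proof covers only $p=2$, so you should add it (or an equivalent device, e.g.\ the weakened nilpotence condition mentioned in the paper's remark) to close the gap.
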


\begin{proof}
The structure of the proof is precisely the same as for Lemma
\ref{lem:twogroups}, and precisely as in that case, it suffices to
show that $G/K_+$ is $R$-built from $S^0$ for a maximal subgroup $K$.

The difference is that we now need  two cofibre sequences rather than
just one.  Suppose  that $\alpha : G\lra U(1)$ is a  representation
with kernel $K$. Write $\Gbar = G/K$  for the
quotient of order $p$. The first important cofibre sequence is  
$$S(\alpha)_+\lra S^0 \lra S^{\alpha},  $$
and the second is the stable cofibre sequence
$$\Gbar_+ \stackrel{1-g}\lra \Gbar_+ \lra S(\alpha)_+,  $$
where $g$ is a suitable generator of $\Gbar$.
The Thom isomorphism shows $S^{\alpha }\sim_R S^2$, and the first cofibre
sequence shows $S(\alpha)_+$ is in the $R$-thick subcategory generated
by $S^0$. Now since $\Gbar$ acts trivially on $R^K_*$, we obtain a
short exact sequence
$$0\lra  R^K_*\lra R^G_*(S(\alpha)_+)\lra \Sigma R^K_*\lra 0. $$
Since the quotient is free over $R^K_*$, there is a splitting map
$\Sigma R\sm G/K_+ \lra R \sm S(\alpha)_+$ of $R$-modules, and
$$R\sm S(\alpha)_+ \simeq R\sm \Gbar_+\vee \Sigma R\sm \Gbar_+$$
This shows $\Gbar_+$ is in the $R$-thick category as required. 

\end{proof}
\begin{remark}
The action condition can be weakened somewhat, first since it is only
necessary for elements of order $p$ in $W_G(K)$ to act trivially, and
secondly we need only require $1-g$ to act nilpotently. 
\end{remark}

\section{Eilenberg-Mac Lane spectra}
\label{sec:EilenbergMacLane}
In this section we consider the special case where the ambient ring spectrum represents ordinary cohomology, for
which we need to recall some terminology. It is worth bearing in mind
that for finite groups and rational coefficients this is the
general case; see \cite[5.1.2]{ss2} and ~\cite[A.1]{GMTate}.

\subsection{Mackey functors}
\label{sec:Mackey}
We recall that a $G$-Mackey functor is a contravariant additive functor 
$M: \SO_G \lra \abgp$ on the stable orbit category $\SO_G$. For brevity, we
write $M(H)$ for the value on $G/H_+$.  When $G$ is finite, 
this is equivalent \cite[V.9.9]{LMS(M)} to the classical definition of a collection of
abelian groups $M(H)$ related by restrictions and transfers satisfying
the Mackey formula \cite{Dress}. 

Evidently any $G$-equivariant cohomology theory $F$ gives rise to a
graded Mackey functor $\underline{F}_G^*$ whose value at $H$ is
$F_H^*$. Equivalently the homotopy groups of $F$ give a Mackey functor
$\piM_*(F)=[\cdot , F]^G_*$ whose value at $H$ is
$\pi^H_*(F)=[G/H_+, F]^G_*$. A cohomology theory whose value on all 
homogeneous spaces is concentrated in degree 0 is called {\em
  ordinary} or {\em Bredon}. For any Mackey functor $M$, up to equivalence  there is a unique
cohomology theory with coefficient functor $M$, and we write $HM$ for
the representing $G$-spectrum. Thus
$$H_G^*(X;M)=HM_G^*(X)=[X,HM]_G^*.$$
The category of $G$-Mackey functors has a  symmetric monoidal product
which can be swiftly defined by 
$$M\Box N=\piM_0(HM\sm HN),$$
but this can be made categorically explicit using the Day coend
construction \cite{Day, Lewis}. 

A monoid in the category of Mackey functors is called a {\em Green
  functor}. If $\macR$ is a Green functor $H\macR$ is a ring spectrum, and
if the Green functor is commutative, the ring spectrum can also be
taken to be commutative \cite{Ullman}. A prime example is the representable 
Mackey functor $\A_T$ where $T$ is a disjoint union of orbits, 
with $\A_T (H)=[G/H_+, T_+]^G_0$. In particular the Burnside
functor itself is the case when $T$ is a point and has $\A
=\piM_0(S^0)$,  so that  $\A (H)$ is the 
Burnside ring $A(H)$ of virtual $H$-sets. 

\subsection{Modules over an Eilenberg-Mac Lane spectrum}
\label{subsec:EM}
We are now ready to discuss  $R=H\macR$, where $\macR$ is a Green functor,
and we take $N=G$ so we can concentrate on the new features.  In this
case the fixed point spectrum is also an Eilenberg-Mac Lane spectrum
$R^G=(H\macR)^G=H(\macR (G))$.  We are considering the adjunction 
$$\RLadjunction{\Psi^G}{\mbox{$H\macR$-mod-$G$-spectra}}{ \mbox{$H\macR
    (G)$-mod-spectra}}{H\macR\tensor_{H\macR(G)}(\cdot)}. $$

We can be a little more explict. First note that for any connective non-equivariant spectrum
$X$ we have $\piM_0(\infl X)=\pi_0(X)\tensor \A$, where $\A $ is the
Burnside functor. Indeed since Burnside rings are free over $\macZ$, 
both sides are (Mackey functor valued, nonequivariant) homology
theories of $X$; the universal property of $\A$ gives a natural
transformation, and it is an isomorphism for $X=S^0$. Next, note that 
when $X$ is connective,  
$$[\infl X , H\macR]^G=H\macR^0_G(\infl X)=\Hom (\pi_0(X), \macR(G)).  $$
First note that both sides depend only on the 1-skeleton of $X$, and
now  it suffices to note that there is a natural comparison map which
is an isomorphism for a wedge of copies of $S^0$, and both sides are left exact. 
It therefore follows that the counit of the adjunction 
$$\infl R^G=\infl H\macR (G)\lra H\macR =R$$
is determined up to homotopy by being the scalar extension of the
identity map in $ \pi^G_0$.

Note that if $G$ is finite $H\macR^K_*(Y_+)$ is concentrated in degree
0 for any subgroup $K$ and any finite $K$-set $Y$. Accordingly,  $H\macR$-module maps between the generators
$G/H_+ \sm H\macR$ of the category of $H\macR$-modules are concentrated in
degree 0 and Morita theory~\cite[5.1.1]{ss2} shows that we have a purely algebraic
description of the categories concerned: 
$$\mbox{$H\macR$-mod-$G$-spectra}\simeq  \mbox{$\macR$-mod-$G$-Mackey},$$  
 where `mod' in the algebraic setting refers to differential graded modules. 
Thus we are considering
$$\RLadjunction{ev_G}{\mbox{$\macR$-mod-$G$-Mackey}}{ \mbox{$\macR
 (G)$-mod-Abgrps}}{\macR\tensor_{\macR(G)}(\cdot)}. $$ 
This is a Green ring analogue of  one of the  adjoints of evaluation
\cite{RemMackey}, and it would be interesting to give a more systematic
account of the case $N\neq G$.

\begin{remark}
An instructive {\em non-example} is to take the ring $R=\macQ[G]$, so that
$R^N=\macQ [G/N]$. This is not an equivariant ring spectrum since, with  the 
implied action of $G$ on $R$ by left translation, the multiplication on
$R$ is {\em not} $G$-equivariant, and the unit map is {\em not} $G$-equivariant.

The alternative to this is to consider the conjugation action on $G$,
and then we obtain a genuine equivariant ring. Of course this applies
equally to the group ring $R=k[G^c]=k\sm G^c_+$ for any commutative 
ring spectrum $k$.
\end{remark}

\section{The sphere spectrum}
\label{sec:sphere}
Perhaps the first naturally occurring example has
$R=S^0$ and $N=G$. We first observe that by Segal-tom Dieck splitting
we have 
$$R^G=(S^0)^G\simeq \bigvee_{(H)}BW_G(H)^{L(H)}, $$
where $L(H)$ is the representation on the tangent space to the
identity coset of $G/H$. This is rather a complicated spectrum, and 
utterly different from the sphere. 

For simplicity we restrict attention to the rational case.

\subsection{Finite groups}
\label{subsec:fingps}
When $G$ is finite, and we work rationally,  the sphere is the
Eilenberg-Mac Lane spectrum for the rationalized Burnside functor $\A$:
$S^0\simeq_{\macQ} H\A$, and this is a special case of the material in Subsection
\ref{subsec:EM}. In particular $R^G$ (which is a product of rational
spheres) is more naturally described as the Eilenberg-Mac Lane spectrum for the
 rationalization of the Burnside ring
 $A(G)=\pi^G_0(S^0)=\pi_0((S^0)^G)$.
Thus we are considering 
the usual fixed point adjunction and the adjunction between the Quillen equivalent algebraic categories from Section \ref{subsec:EM}
$$\diagram
\mbox{$S^0_{\macQ}$-module-$G$-spectra} \dto^{\simeq}
\rrto<-0.7ex> &&
\mbox{$(S^0_{\macQ})^G$-module-spectra} \dto^{\simeq} \llto<-0.7ex> \\
\mbox{$\A_{\macQ}$-module-$G$-Mackey} 
\rrto<-0.7ex> &&
\mbox{$A(G)_{\macQ}$-modules.}  \llto<-0.7ex> 
\enddiagram $$
It is simplest to discuss the lower purely algebraic adjunction, but
of course the discussion has a complete topological counterpart. 

The mark homomorphism $A(G) \lra \prod_{(H)} \macZ$ is a rational
isomorphism, so we have a complete set of idempotents $e_H\in
A(G)_{\macQ}$, and we may split both sides. On the right, by definition
of the idempotents,  $e_HA(G)_{\macQ}\cong \macQ$. On the left, 
evaluation at $G/H$ gives an isomorphism \cite[Theorem A.9]{GMTate}
$$\mbox{$e_H\A_{\macQ}$-modules}\stackrel{\cong}\lra  \mbox{$\macQ
  W_G(H)$-modules}. $$
Picking one subgroup $H$ and considering just the $H$th factor by applying the idempotent $e_H$ to the four categories in the above diagram gives the top two lines in the next diagram.  For the bottom line we then use the computations from this paragraph to simplify the algebraic categories.
$$\diagram
\mbox{$e_HS^0_{\macQ}$-module-$G$-spectra} \dto^{\simeq}
\rrto<-0.7ex> &&
\mbox{$e_H(S^0_{\macQ})^G$-module-spectra} \dto^{\simeq} \llto<-0.7ex> \\
\mbox{$e_H\A_{\macQ}$-module-$G$-Mackey} \dto^{\simeq}
\rrto<-0.7ex> &&
\mbox{$e_HA(G)_{\macQ}$-modules}  \dto^{\simeq}\llto<-0.7ex> \\
\mbox{$\macQ W_G(H)$-modules} 
\rrto<-0.7ex> &&
\mbox{$\macQ$-modules.}  \llto<-0.7ex> 
\enddiagram $$
The right adjoint at the top is $G$-fixed points, in the middle it is
evaluation at $G/G$ and at the bottom it $W_G(H)$-fixed points. 
%

If $W_G(H)$ is non-trivial, there is more than one simple $\macQ
W_G(H)$-module, so the $H$th factors on the left and right of the
bottom row are inequivalent for most subgroups $H$. 
Indeed, in the bottom row  the right adjoint takes
$W_G(H)$-fixed points, so that on the right we have just 
the part corresponding to the trivial module.  In model theoretic
terms, the Cellularization Principle shows that the category $\macQ
W_G(H)$-modules with trivial action (equivalent to the category of
$\macQ$-modules) is
the cellularization of the category of all $\macQ W_G(H)$-modules with
respect to $\macQ$.

Reassembling the pieces by considering the product categories,
we reach the conclusion that for $R=S^0_{\macQ}$, the
fixed-point-inflation adjunction is 
$$\RLadjunction{R}{\prod_{(H)} \mbox{$\macQ W_G(H)$-mod}}
{\prod_{(H)} \mbox{$\macQ$-mod}}{L}, $$
which is only an equivalence when $G$ is trivial.

\subsection{The circle group}
When $G$ is the circle group, the ring  $\pi_*^G(S^0)$ has trivial
multiplication, so most information is captured in non-zero homological
degree. In any case, 
the fixed-point-inflation adjunction is not a Quillen equivalence
either (by connectivity the counit is not an equivalence for a free cell $G_+$). 
To get a Quillen equivalence with a category of modules over a
non-equivariant ring spectrum it is more effective to use a Koszul
dual approach as in Example \ref{eg:DEG} below, and as given in detail
in \cite{s1q, tnq3}. 

\section{Models of categories of modules over $N$}
\label{sec:RmodoverN}
A very satisfying class of examples is a generalization of the basic
property of geometric fixed point spectra. This takes the
results in Section \ref{sec:GspectraoverN} into the context of
modules over a ring spectrum.

To start we consider an arbitrary ring $G$-spectrum $R$ and introduce the notation
\begin{itemize}
\item $\macC_0=\mbox{$R$-module-$G$-spectra}$
\end{itemize} 
Now we may consider a model of $R$-module $G$-spectra
over $N$, just as for the sphere in Section \ref{sec:GspectraoverN}.
\begin{itemize}
\item $\macC_1=L_{\Phi^N}(\mbox{$R$-module-$G$-spectra})$
\end{itemize}

Now the $G/N$-spectrum $\Rbar=\Phi^NR$ is the $N$-fixed points of the 
associative ring spectrum $R'=R\sm \etnotN$, and hence it is an associative
ring with a module category

\begin{itemize}
\item $\macC_2$=$\Phi^NR$-module-$G/N$-spectra
\end{itemize}
If $\Rbar$ admits the structure of a commutative ring, then the
module category admits a symmetric monoidal 
tensor product. 
Note also that there is a map $R\lra R'$ of
associative ring spectra. If $R$ is concentrated over $N$ then this is
a weak equivalence, and the module categories are Quillen equivalent.

If we {\em assume} that $R$ is a ring $G$-spectrum over $N$,
 then we obtain a ring $G/N$-spectrum $R^N\simeq
(R')^N\simeq \Phi^NR$ which is strictly commutative if $R$ is, and 
 $R\simeq  \infl \Phi^N R \sm \etnotN$.  One way to reach this setting
 is to start with a ring $G/N$-spectrum $\Rbar$ and then {\em define} $R=\infl \Rbar \sm \etnotN$, noting
that $R\simeq R'$ in this case  (and that $R$ need not be strictly commutative).

\begin{thm}
\label{thm:NfixedmodoverN} 
If $R$ is a ring $G$-spectrum concentrated over $N$ then the fixed
point adjunction gives  a Quillen equivalence 
$$\macC_0=\RmodG \simeq \RNmod\simeq \macC_2 $$
and these are also Quillen equivalent to $\macC_1=L_{\Phi^N}(\mbox{$R$-module-$G$-spectra})$. 
\end{thm}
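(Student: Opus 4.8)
The plan is to deduce Theorem~\ref{thm:NfixedmodoverN} from the general fixed point $R$-module adjunction of Subsection~\ref{subsec:q.eq} by verifying that, \emph{under the hypothesis that $R$ is concentrated over $N$}, the cells are already built, so that cellularization does nothing. Recall from Subsection~\ref{subsec:q.eq} that we always have a Quillen equivalence $\RcellRmodG \simeq \RNcellRNmodGN$ between the $R$-cellularization of $\RmodG$ and the $R^N$-cellularization of $\RNmod$, using the set $\cK=\{R\}$ on the left. So the content of the theorem is: (a) $R$-cellularization has no effect on $\RmodG$ when $R$ lies over $N$; (b) $R^N$-cellularization has no effect on $\RNmod$; and (c) the module category $\macC_2 = \Phi^NR\mbox{-mod-}G/N\mbox{-spectra}$ agrees with $\RNmod$, and $\macC_1$ agrees with $\macC_0$ as well.

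For (a), the key point is that if $R$ lies over $N$, then $R\simeq R\sm\etnotN$, so every $R$-module is a module over $R\sm\etnotN$; in particular every $R$-module lies over $N$ (its geometric isotropy consists of subgroups containing $N$), and equivalently lies in the image of inflation after smashing with $\etnotN$. The claim is then that $R$ $R$-builds all the modules $R\sm G/H_+$ needed as generators. For $H\not\supseteq N$ the module $R\sm G/H_+$ is contractible (since $(G/H_+)\sm\etnotN\simeq *$ when $H$ does not contain $N$), so there is nothing to build. For $H\supseteq N$, we use that $R\sm G/H_+ \simeq R\sm\inflGbarG((G/N)/(H/N)_+)$, and by the analysis in Section~\ref{subsec:EM}-style reasoning (or rather the behaviour of the Lewis-May functors under inflation, \cite[V.1.5]{mm}), building $R\sm G/H_+$ from $R$ over $G$ is equivalent to building $R^N\sm (G/N)/(H/N)_+$ from $R^N$ over $G/N$. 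So (a) reduces to (b): it suffices to know that the generators $R^N\sm (G/N)/\Kb_+$ are $R^N$-built from $R^N$. But this is the statement that the category of $R^N$-modules is monogenic, which is \emph{not} true for a general $R^N$. The resolution — and this is the subtle part of the argument — is that we do \emph{not} need monogenicity: we are free to cellularize both sides with respect to the \emph{full} set of generators, i.e. take $\cK = \{R\sm G/H_+ : H\supseteq N\}$ on the left (the $H\not\supseteq N$ ones being trivial) and check that $FQ$ sends this to (a set equivalent to) the standard generating set $\{R^N\sm (G/N)/\Kb_+\}$ of $\RNmod$, so that the right-hand cellularization is trivial; then apply Theorem~\ref{thm:cellprin}(1)--(2).

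Concretely, the steps I would carry out are: (1)~Identify $\Psi^N Q(R\sm G/H_+)$: since $R$ lies over $N$, $R\sm G/H_+\simeq R\sm\etnotN\sm G/H_+$, and the Lewis-May fixed points of $\etnotN\sm G/H_+\sm(\text{anything})$ compute the geometric fixed points; using $\Phi^N(G/H_+)\simeq (G/H)^N_+ = ((G/N)/(H/N))_+$ for $H\supseteq N$ and the fact that $\Psi^N$ agrees with $\Phi^N$ on modules over $R\sm\etnotN$, conclude $\Psi^N Q(R\sm G/H_+)\simeq R^N\sm ((G/N)/(H/N))_+$. (2)~Check the derived unit $Q(R\sm G/H_+)\to \Psi^N R(R^N\tensor_{\infl R^N}\infl(R^N\sm ((G/N)/(H/N))_+))$ is an equivalence — this follows from $R\simeq\infl R^N\sm\etnotN$ and the projection/base-change formula, reducing to the unit being an equivalence for $R$ itself, which was already observed. (3)~Apply Theorem~\ref{thm:cellprin}(1)--(2) with $\cK$ the set of all these generators to get $\cK\mbox{-cell-}\RmodG \simeq_Q \Psi^N Q\cK\mbox{-cell-}\RNmod$. (4)~Observe $\cK$ generates $\RmodG$ (the standard set of compact generators), so the left cellularization is trivial, and likewise $\Psi^N Q\cK$ is, up to the contractible entries, the standard generating set of $\RNmod$, so the right cellularization is trivial; hence $\RmodG\simeq\RNmod$. (5)~Finally, for the identifications with $\macC_1$ and $\macC_2$: $\macC_0\simeq\macC_1$ because $R$ lying over $N$ means the $\Phi^N$-equivalences in $R$-modules are already all equivalences (the map $R\to R\sm\etnotN = R'$ is an equivalence, and localizing at a class that already consists of equivalences changes nothing), so $L_{\Phi^N}(\RmodG)$ has the same weak equivalences as $\RmodG$; and $\macC_1\simeq\macC_2$ by the Quillen equivalence $\RNmod\simeq(R')^N\mbox{-mod-}G/N\mbox{-spectra} = \Phi^NR\mbox{-mod-}G/N\mbox{-spectra}$, using $R^N\simeq(R')^N\simeq\Phi^NR$ as in the discussion preceding the theorem, together with the localization model of Section~\ref{sec:GspectraoverN} transported across the module structure exactly as in Subsection~\ref{subsec:modulemodel}.

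The main obstacle is step~(1)/(4): getting the identification $\Psi^N Q(R\sm G/H_+)\simeq R^N\sm ((G/N)/(H/N))_+$ precise at the point-set level (not just up to homotopy), and then making the bookkeeping argument that the image of the full generating set under $\Psi^N Q$ is, after discarding contractible objects, \emph{exactly} a generating set for $\RNmod$ rather than merely sitting inside one. This needs the compatibility of $\Psi^N$ with $\Phi^N$ on modules over $R\sm\etnotN$ and the standard fact that $\Phi^N$ takes a compact generating set of $G$-spectra lying over $N$ to a compact generating set of $G/N$-spectra --- essentially the content of the Proposition \cite[VI.5.3]{mm} already quoted, upgraded to module categories via \cite{ss-mon}.
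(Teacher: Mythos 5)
Your proposal is correct and follows essentially the same route as the paper: apply the Cellularization Principle to the fixed point adjunction using the full generating sets of cells, observe that the cells $R\sm G/H_+$ with $H\not\supseteq N$ are contractible while those with $H\supseteq N$ correspond under the adjunction to the generators $R^N\sm \Gbar/\Hbar_+$, so both cellularizations are trivial; and the identifications with $\macC_1$ and $\macC_2$ use exactly the paper's observations that $R\simeq R\sm\etnotN$ (so weak equivalences coincide with equivalences over $N$) and $R^N\simeq\Phi^NR$. The paper's own proof is just a terser version of your steps (1)--(5).
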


\begin{remark}
\label{rem:spectraoverN}
(i) If $X$ is an $R$-module, then $X$ is a retract of 
$R \sm X$, so that if $R$ lies over $N$ so too do its modules. 

(ii) If $X$ lies over $N$ then $X^N\simeq \Phi^NX$. Accordingly 
$R^N\simeq \Phi^NR$ and Lewis-May fixed points may be replaced by 
geometric fixed points throughout. 
\end{remark}

\begin{proof}
By the discussion in Subsection \ref{subsec:q.eq} we need only check
that  the cells generating the two module categories correspond. 

Indicating the image of
a subgroup in $G/N$ by a bar, we need only remark that the 
extensions of the cells $R^N \sm \Gbar/\Hbar_+$ which generate the 
category of $R^N$-module $\Gbar$-spectra generate $R$-module
$G$-spectra. This is because the cells $G/H_+\sm R$ with $H$ not 
containing $N$ are contractible. 

For the final statement we note that since $R$ is
concentrated over $N$, $R\simeq R \sm \etnotN$ so that weak 
equivalences coincide with weak equivalences over $N$. This means that
the $\etnotN$-Bousfield localization of $R$-module-$G$-spectra  is a Quillen
equivalence. 
\end{proof}

\section{Eilenberg-Moore examples}
\label{sec:EilenbergMoore}
An example of rather a different character is given by taking a
$G/N$-equivariant ring spectrum $\Rbar$ and an $N$-free $G$-space $E$
and then taking $R=F(E_+, \infl \Rbar)$, so that $R^N=F(E/N_+,\Rbar)$. 
We shall concentrate on $N=G$ with $E=EG$. 
Our focus is on the case  $\Rbar = Hk$ for a 
commutative ring $k$, but we also comment on $\Rbar =S^0$.

Turning to the first example of the above type we take $N=G$ and  
$$R=F(EG_+, \infl Hk)\simeq F(EG_+, H\underline{k}) \mbox{ so that }
  R^G=F(BG_+,Hk) $$
where $\underline{k}$ is the Mackey functor constant at $k$. In
general we write $C^*(X;k) =F(X,Hk)$ since this is a spectrum whose
homotopy is $H^*(X;k)$. Thus $R=C^*(EG;k)$ and $R^G=C^*(BG;k)$; we
will usually not mention the coefficients explicitly. 

Omitting the coefficients $k$ from the notation, we consider the
adjunction
$$\RLadjunction{\Psi^G}{\mbox{$C^*(EG)$-mod-$G$-spectra}}{
  \mbox{$C^*(BG)$-mod-spectra}}{C^*(EG)\tensor_{C^*(BG)}(\cdot)}. $$
Because of existing machinery,  we won't use the methods of Subsection 
\ref{subsec:strongmonogenic}, but instead discuss directly for which
$G$-spaces $X$ the unit of the adjunction is  an equivalence for the 
$C^*(EG)$-module $C^*(EG \times X)$. Since $C^*(EG\times X) \simeq
C^*(EG)\sm D(X_+)$ when $X$ is a finite complex,  discussion of modules of 
this
form will in particular include the generators $C^*(EG)\sm G/H_+$.
Note that if $Y$ is  a free $G$-space (such as $EG\times X$), we have
 $C^*(Y)^G\simeq C^*(Y/G)$ so that the counit here,
$$ C^*(EG)\tensor_{C^*(BG)} C^*(Y/G) \to C^*(Y),$$
is an embodiment of the Eilenberg-Moore 
spectral sequence for the fibration
$$ X \lra EG\times_G X \lra EG\times_G *=BG. $$
When the Eilenberg-Moore spectral sequence converges, 
$k \tensor_{C^*(BG)} C^*(Y/G)\simeq C^*(Y)$, so the counit is an
equivalence. 

\begin{cor}
If $G$ is connected or if $\pi_0G$ is a $p$-group and  $p^N=0$ on $k$
then we have a Quillen-equivalence 
$$\mbox{$C^*(EG)$-mod-$G$-spectra}\simeq \mbox{$C^*(BG)$-mod-spectra}.$$
\end{cor}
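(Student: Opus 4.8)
The plan is to deduce the Corollary from part~(3) of the Cellularization Principle (Theorem~\ref{thm:cellprin}), applied to the fixed-point adjunction displayed above, with left adjoint $F=C^*(EG)\tensor_{C^*(BG)}(\cdot)$ (precomposed with inflation) and right adjoint $U=\Psi^G$; thus the ambient ring is $C^*(EG)$, we take $N=G$, the $N$-fixed ring is $C^*(EG)^G\simeq C^*(BG)$, and the quotient group $G/N$ is trivial. Write $\N=\modcatG{C^*(EG)}$ and $\M=\modcatGG{C^*(BG)}$, so that $F\colon\M\to\N$ and $U\colon\N\to\M$. Take $\mcL$ to be the set of (de)suspensions of the modules $C^*(EG\times G/H)$ as $H$ ranges over closed subgroups of $G$. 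As noted just before the Corollary, $C^*(EG\times G/H)\simeq C^*(EG)\sm D(G/H_+)$, so these build --- and are built by --- the standard cells $C^*(EG)\sm G/H_+$ of $\N$; hence $\mcL$ generates $\N$ and its $\mcL$-cellularization is $\N$ itself. On the other side, for the free $G$-space $Y=EG\times G/H$ we have $U\,C^*(Y)\simeq C^*(Y)^G\simeq C^*(Y/G)=C^*(BH)$ as recorded above; the case $H=G$ gives $C^*(BG)$, the generating unit of $\M$, so $U\mcL$ generates $\M$ and its $U\mcL$-cellularization is $\M$. The smallness hypotheses of Theorem~\ref{thm:cellprin}(3) hold: $C^*(EG)\sm D(G/H_+)$ is small in $\N$ since $D(G/H_+)$ is a finite $G$-CW spectrum, and $C^*(BH)$ is small (dualizable) over $C^*(BG)$ because the fibration $BH\to BG$ has finite-complex fibre $G/H$. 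So Theorem~\ref{thm:cellprin}(3) delivers the desired Quillen equivalence provided the derived counit is a weak equivalence on every object of $\mcL$.

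Thus the substantive step is to check, for each closed subgroup $H$, that the derived counit at $C^*(EG\times G/H)$,
$$C^*(EG)\tensor_{C^*(BG)}C^*(BH)\lra C^*(EG\times G/H),$$
is a weak equivalence. As explained in the paragraph before the Corollary, this is precisely the map carrying the Eilenberg--Moore spectral sequence of the fibration $G/H\to BH\to BG$ (equivalently, of the pullback of $EG\to BG$ along $BH\to BG$): the target has homotopy $H^*(EG\times G/H;k)=H^*(G/H;k)$, while the homotopy of the source is assembled from $\Tor_*^{H^*(BG;k)}(k,H^*(BH;k))$ by that spectral sequence. Hence the counit at $C^*(EG\times G/H)$ is a weak equivalence precisely when this Eilenberg--Moore spectral sequence converges strongly.

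Finally I would invoke the standard convergence criterion (classical Eilenberg--Moore over a simply-connected base; Dwyer in general): the spectral sequence of $G/H\to BH\to BG$ converges strongly when $\pi_1(BG)=\pi_0(G)$ acts nilpotently on $H^*(G/H;k)$. If $G$ is connected this is vacuous, since $\pi_0(G)=1$ and $BG$ is simply connected. If $\pi_0(G)$ is a $p$-group and $p^N=0$ on $k$, then $H^*(G/H;k)$ is a $\macZ/p^N$-module and the finite $p$-group $\pi_0(G)$ acts on it nilpotently, the augmentation ideal of $(\macZ/p^N)[\pi_0(G)]$ being nilpotent. In either case the derived counit is a weak equivalence on all of $\mcL$, and Theorem~\ref{thm:cellprin}(3) gives
$$\modcatG{C^*(EG)}\;\simeq\;\mbox{$\mcL$-cell-$\N$}\;\Qeq\;\mbox{$U\mcL$-cell-$\M$}\;\simeq\;\modcatGG{C^*(BG)},$$
which is the asserted Quillen equivalence. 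The genuinely delicate ingredient is this convergence input: it is exactly where the hypotheses on $G$ and $k$ enter, and the conclusion fails without them --- for example for $G=\macZ/2$ and $k=\macQ$, where $\pi_0(G)$ acts on $H^*(G/e;\macQ)\cong\macQ\oplus\macQ$ by a transposition, which is not nilpotent. The remaining ingredients --- the smallness checks and the Atiyah-duality bookkeeping relating $C^*(EG)\sm G/H_+$ to $C^*(EG\times G/H)$ --- are routine.
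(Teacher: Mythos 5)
Your proof is correct and follows essentially the same route as the paper's: verify strong convergence of the Eilenberg--Moore spectral sequence (citing Dwyer) to show the derived counit is an equivalence on a set of small generators, then invoke the Cellularization Principle. You supply considerably more detail than the paper --- the explicit choice of $\mcL$, the smallness and cellularization bookkeeping, and the nilpotency argument behind Dwyer's criterion --- and you do not mention the paper's alternative argument via Thom isomorphisms and strong monogenicity (Lemmas~\ref{lem:twogroups} and \ref{lem:pgroups}), but none of this changes the substance.
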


\begin{proof}
Under the given hypotheses, the Eilenberg-Moore spectral sequence is
convergent, so that  $k\tensor_{C^*(BG)}C^*(Y/G) \simeq C^*(Y)$ for any free
$G$-space $Y$ \cite{Dwyer, Mandell}. Alternatively, since we have Thom
isomorphisms, we may argue by Lemmas \ref{lem:twogroups} and
\ref{lem:pgroups} that the category of $R$-modules is strongly
monogenic. 

In either case, the counit is an equivalence for a set of small
generators, so  it follows from the Cellularization Principle
\ref{thm:cellprin} that the  adjunction is a Quillen equivalence.  
\end{proof}

To highlight the significance of this result we give two examples
where we do not obtain an equivalence. 

\begin{example} If $G$ is finite and we take rational
coefficients, then the cofibration $EG_+\lra S^0 \lra \tilde{E}G$
splits and we have a rational equivalence  $C^*(EG)\simeq
EG_+$. Accordingly, $C^*(EG)$  is free and  
$$\mbox{$C^*(EG)$-mod-$G$-spectra}\simeq \mbox{$G$-free-$S^0_{\macQ}$-mod-$G$-spectra}
\simeq \mbox{$\macQ G$-mod}. $$
On the other hand, 
$$\mbox{$S^0_{\macQ}$-mod-spectra}\simeq \mbox{$\macQ $-mod}. $$
Note that the same conclusion was reached in Subsection
\ref{subsec:fingps}. In any case,  the category of $\macQ G$-modules is not equivalent to the
category of $\macQ$-modules unless $G$ is trivial. 
\end{example}

\begin{example}
\label{eg:DEG}
Despite the truth of the Segal conjecture, the case with coefficients
in a sphere is not so well behaved. Here we take  
$R=DEG_+=F(EG_+, S^0)$ with  $N=G$ and
$R^G=DBG_+=F(BG_+, S^0)$. The adjunction then takes the form 
$$\RLadjunction{\Psi^G}{\mbox{$DEG_+$-mod-$G$-spectra}}{
  \mbox{$DBG_+$-mod-spectra}}{DEG_+\tensor_{DBG_+}(\cdot)}. $$ 
Again this is sometimes a Quillen equivalence and sometimes not. If we
have rational coefficients, then $S^0\lra H\macQ$ is a non-equivariant
equivalence, so we are back in the Eilenberg-Moore situation, where we know
it is a Quillen equivalence if $G$ is a torus, but not if $G$ is a
non-trivial finite group.  
  
Carlsson's theorem \cite{Carlsson} implies \cite{DEG2,DEG3} that when
$X$ is a based finite complex,
$D(EG_+ \sm X)\simeq (DX)_I^{\wedge}$   
where $I$ is the augmentation ideal of the
Burnside ring. In particular, if $G$ is a $p$ group and we take
coefficients in the $p$-adic sphere we have 
$$R=F(EG_+,
(S^0)_p^{\wedge})\simeq  ((S^0)_p^{\wedge})_I^{\wedge}\simeq
  (S^0)_p^{\wedge}$$
since some power of $I$ lies in the ideal $(p)$.     
Accordingly this puts us back in the situation of Section
\ref{sec:sphere}. 

Note that since rationalization commutes with tensor products, we have
$$(R\tensor_{R^G}M)_{\macQ} \simeq R_{\macQ}\tensor_{(R_{\macQ})^G}M_{\macQ}. $$
From our analysis of the rational case in Section \ref{sec:sphere} we see that we do not obtain 
a Quillen equivalence when $R=F(EG_+, (S^0)_p^{\wedge})$. 
\end{example}





\section{The ring spectrum $\protect DE{\mathcal F}_+$}

In this section we work rationally, and consider the special case when $G$ is a torus. 

One example of importance in the study of rational spectra is the
counterpart of the Eilenberg-Moore example for almost free spaces
(i.e., spaces whose isotropy groups are all finite). 
Thus we let $\cF$
denote the family of finite subgroups of $G$, and we recall the
splitting theorem from \cite{EFQ}: 
$$\efp \simeq \prod_{F} E\lr{F},  $$
where the product is over  finite subgroups $F$.

Taking rational duals, we find
$$R=\DH \efp \simeq \prod_{F} \DH E\lr{F}
\mbox{ and } R^G= \DH (\efp)^G\simeq \prod_{F\in \cF} \DH (B(G/F)_+). $$
Accordingly, this is really a question of assembling the information
from the different finite subgroups.
Since $G$ is abelian we have $E\lr{H}=EG/H_+\sm \widetilde{E}[\not
\supseteq H]$, so the difference between $EG/H_+$ and $E\lr{H}$ is easy to
describe. When $H$ is also finite and coefficients are rational, this
simplifies further.

\begin{lemma}
\label{lem:suspclosure}
With $R=D\efp$, provided $G$ is a torus and we use rational coefficients,  
 the spheres of complex representations are $R$-built
from $S^0$. 
 \end{lemma}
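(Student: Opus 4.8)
The plan is to imitate the proof of Lemma~\ref{lem:RthickT}, but since $R=\DH\efp$ fails to be complex stable in the genuine equivariant sense — restricted to a finite subgroup it is merely the sphere spectrum, which has no Thom isomorphisms — one has to localize the Thom-isomorphism argument to the individual summands of the rational splitting of $\efp$. First I would use the splitting $\efp\simeq\prod_{F}E\lr{F}$ over the finite subgroups $F$, from \cite{EFQ}. This induces a product decomposition of the ring $R=\DH\efp$, and hence of the category of $R$-modules into a product of categories of $\DH E\lr{F}$-modules; so it suffices to show, for each finite $F$, that $S^{V}$ is $\DH E\lr{F}$-built from $S^{0}$. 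Since $E\lr{F}=EG/F_{+}\sm\widetilde{E}[\not\supseteq F]$ has geometric isotropy exactly $\{F\}$, it is a module over $\widetilde{E}[\not\supseteq F]$, and hence so is its Spanier--Whitehead dual $\DH E\lr{F}$; that is, $\DH E\lr{F}$ is concentrated over $F$. Theorem~\ref{thm:NfixedmodoverN} (applied with $N=F$) and Remark~\ref{rem:spectraoverN} then give a Quillen equivalence between $\DH E\lr{F}$-modules in $G$-spectra and $(\DH E\lr{F})^{F}$-modules in $G/F$-spectra, under which (together with the underlying functor $\Phi^{F}$) the object $\DH E\lr{F}\sm S^{V}$ corresponds to $(\DH E\lr{F})^{F}\sm S^{V^{F}}$, where $V^{F}$ is the $F$-fixed subrepresentation of $V$, a complex $G/F$-representation.

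It remains to show that $S^{0}$ $(\DH E\lr{F})^{F}$-builds $S^{V^{F}}$, and here rationality does the work. Because $\Sigma^{\infty}_{+}BF\simeq_{\macQ}S^{0}$, the quotient $EG/F$ is rationally equivalent to $E(G/F)$, so by Remark~\ref{rem:spectraoverN}(ii) we have $(\DH E\lr{F})^{F}\simeq\Phi^{F}\DH E\lr{F}\simeq_{\macQ}\DH E(G/F)_{+}\simeq F(E(G/F)_{+},H\macQ)$, a ring spectrum whose $G/F$-fixed-point spectrum is $C^{*}(B(G/F);\macQ)$, with homotopy the polynomial ring $H^{*}(B(G/F);\macQ)$. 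This ring is (rationally) complex orientable, hence complex stable, so over it $S^{V^{F}}\sim S^{|V^{F}|}$, a suspension of $S^{0}$, and therefore $S^{0}$ builds $S^{V^{F}}$; this last point is exactly the Euler-class/Thom-isomorphism cofibre sequence used in Lemma~\ref{lem:RthickT}, which now runs because the cohomology ring is polynomial and so has no zero divisors. Reassembling over the finite subgroups $F$ yields $S^{0}\builds_{R}S^{V}$ for every complex representation $V$.

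The main obstacle is the passage to fixed points in the first paragraph. One has to check that the splitting $\efp\simeq\prod_{F}E\lr{F}$ carries enough multiplicative structure for the category of $R$-modules to decompose as a genuine product of module categories, and — the more delicate point — that the categorical $F$-fixed points of $\DH E\lr{F}$, which is built by a \emph{function-spectrum} construction rather than a smash product, really do compute $F(E(G/F)_{+},S^{0})$, i.e.\ that $\Phi^{F}$ may be interchanged with Spanier--Whitehead duality in this case. Both are true because $\DH E\lr{F}$ is concentrated over $F$ and we are working rationally, but this is where the real content of the lemma lies; granting these identifications, the only remaining input is the classical computation of $H^{*}(B(G/F);\macQ)$.
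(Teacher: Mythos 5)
Your per-factor strategy founders at the reassembly step, and this is a genuine gap rather than a presentational one. The reduction ``it suffices to show, for each finite $F$, that $S^V$ is $\DH E\lr{F}$-built from $S^0$'' presupposes that the category of $R$-modules decomposes as the product, over the \emph{infinitely many} finite subgroups $F$, of the categories of $\DH E\lr{F}$-modules, and that $R$-building can be checked one factor at a time. Neither is available: for an infinite product of rings the module category is not the product of the module categories (the wedge of the factors and the product of the factors are different $R$-modules with the same idempotent pieces), and, more to the point, the $R$-localizing subcategory generated by $S^0$ is closed under coproducts, retracts and triangles but \emph{not} under infinite products, so knowing that $e_F(R\sm S^V)$ is built from $e_FR$ for every $F$ does not by itself give $S^0\builds_R S^V$. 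The paper's proof is organized precisely around this point: since $V$ is a fixed finite-dimensional representation, $\dimC(V^F)$ takes only finitely many values as $F$ ranges over the finite subgroups, so one groups the subgroups accordingly and applies only the finitely many corresponding orthogonal idempotents; on the piece where $\dimC(V^F)=k$ the rational Thom isomorphism $S^V\sm E\lr{F}\simeq S^{|V^F|}\sm E\lr{F}$ gives the \emph{same} integer suspension for every $F$ in that group, so $R\sm S^V$ is exhibited as a finite wedge of retracts of integer suspensions of $R$, hence $R$-built from $S^0$. Your write-up never invokes this finiteness, and ``reassembling over the finite subgroups $F$'' has no justification without it.

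A secondary remark: even within a single factor, the detour through Theorem \ref{thm:NfixedmodoverN}, geometric $F$-fixed points and the identification of $(\DH E\lr{F})^F$ with $C^*(B(G/F);\macQ)$ is far heavier than what is needed, and the interchange of $\Phi^F$ with the functional dual that you flag would itself require proof. The paper stays entirely on the undualized side: $S^V\sm\DH\efp\simeq D(S^{-V}\sm\efp)$, and the classical rational Thom isomorphism applied to each $E\lr{F}$ does all the work. So the real content is not where you locate it (fixed points versus duality), but the elementary observation that only finitely many fixed-point dimensions occur, which is what lets the infinite product be handled by finitely many idempotents.
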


\begin{proof}
We shall show that for any complex representation $V$ the suspension
 $S^V\sm D\efp$ is a finite wedge of retracts of integer suspensions of $D\efp$.

Since $S^V\sm D\efp \simeq D(S^{-V}\sm \efp)$, it suffices to deal 
with the undualized form. 

Now suppose given a complex representation $V$. Since we are working
over the rationals, the classical  Thom isomorphism gives an equivalence
$$S^V \sm E\lr{F} \simeq S^{|V^F|}\sm E\lr{F}$$
for each finite subgroup $F$.

Finally, we divide the finite subgroups into sets
according to $\dimC (V^F)$. Of course there are only finitely many of 
these, and we may apply the corresponding orthogonal idempotents to 
consider the sets separately.
For subgroups $F$ with $\dimC (V^F)=k$,  the suspension is the same in
each case, so that taking products over these subgroups, we find 
$$\prod_FD(E\lr{F} \sm S^V \sm X)\simeq S^{-k} \sm
\prod_F D(E\lr{F} \sm X), $$
as required.
\end{proof}

The argument of Lemma \ref{lem:RthickT} shows that the category of
$D\efp$-modules is strongly monogenic. 

\begin{cor}
\label{cor:suspclosure}
With $R=D\efp$, $G$ a torus and rational coefficients we have an
equivalence
$$\mbox{$D\efp$-mod-$G$-spectra}\simeq 
\mbox{$\prod_F C^*(BG/F)$-mod-spectra}.\qqed$$
\end{cor}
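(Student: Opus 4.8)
The plan is to apply the Cellularization Principle \ref{thm:cellprin}(2) to the fixed point adjunction
$$
\RLadjunction
{\Psi^G}
{\mbox{$D\efp$-mod-$G$-spectra}}
{\mbox{$D\efp^G$-mod-spectra}}
{D\efp\tensor_{D\efp^G}(\cdot)}
$$
using the fact, observed just before the statement, that the category of $D\efp$-modules is strongly monogenic. First I would invoke the general framework of Subsection \ref{subsec:q.eq}: the unit is an equivalence on $R^G=D\efp^G$ and the counit is an equivalence on $R=D\efp$, so the Cellularization Principle immediately yields a Quillen equivalence between the $R$-cellularization of $D\efp$-mod-$G$-spectra and the $R^G$-cellularization of $D\efp^G$-mod-spectra. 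The content of the corollary is then to remove the cellularizations on both sides and to identify the right-hand ring with $\prod_F C^*(BG/F)$.

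Next I would remove the cellularization on the left. Since the preceding remark records that $D\efp$-modules form a strongly monogenic category, the Lemma in Subsection \ref{subsec:thick} (the one stating that $R$-cellularization has no effect on $\RmodG$ when $R$ is strongly monogenic, proved via \cite[2.2.1]{ss2}) applies: $R$ detects weak equivalences, so the $R$-cellular equivalences coincide with the underlying weak equivalences and $\RcellRmodG\simeq\RmodG$. For the right-hand side I would argue similarly but more directly: since $S^0$ generates the category of (non-equivariant) spectra, the single object $R^G$ generates $R^G$-mod-spectra, so the $R^G$-cellularization of $R^G$-mod-spectra is just $R^G$-mod-spectra itself; equivalently, $R^G$ detects weak equivalences trivially in the non-equivariant setting. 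Combining these three identifications gives $\mbox{$D\efp$-mod-$G$-spectra}\simeq \mbox{$D\efp^G$-mod-spectra}$.

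Finally I would identify $R^G=D\efp^G$ with $\prod_F C^*(BG/F)$. This is exactly the computation displayed above the lemma: from the splitting $\efp\simeq\prod_F E\lr{F}$ of \cite{EFQ} one gets $R=D\efp\simeq\prod_F D E\lr{F}$, and passing to $G$-fixed points one obtains $R^G\simeq\prod_{F\in\cF} D(B(G/F)_+)=\prod_F C^*(BG/F)$; here one uses that each $E\lr{F}$ is built from the free $G/F$-space $EG/F$ (as $E\lr{F}=EG/F_+\sm\widetilde{E}[\not\supseteq F]$), so that its $G$-fixed points compute $C^*$ of the quotient $B(G/F)$, together with the fact that fixed points commutes with the product. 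A Morita-type statement (as in \cite[5.1.1]{ss2}) then lets one replace the category of modules over the product ring by the stated form, or one simply notes the product decomposition of module categories directly. The main obstacle is the bookkeeping in the identification of $R^G$: one must be careful that the Lewis–May fixed point functor commutes with the infinite product in the splitting and that $(D E\lr{F})^G$ really is $C^*(BG/F)$ rather than something involving the $\widetilde{E}[\not\supseteq F]$ factor; this is where the rationality hypothesis and the structure of $E\lr{F}$ for $G$ a torus, already exploited in Lemma \ref{lem:suspclosure}, do the work.
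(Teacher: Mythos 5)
Your proposal is correct and follows essentially the same route as the paper: the corollary is exactly the combination of the strong monogenicity of $D\efp$-modules (Lemma \ref{lem:suspclosure} plus the argument of Lemma \ref{lem:RthickT}) with Corollary \ref{cor:strongmonsuff} in the case $N=G$ (where the non-equivariant side is automatically monogenic), together with the identification $R^G\simeq\prod_F C^*(BG/F)$ already displayed at the start of the section via the splitting of \cite{EFQ}. Your rederivation of the cellularization-removal steps is just an unpacking of Corollary \ref{cor:strongmonsuff}, so there is no substantive difference.
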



\begin{remark}
We note that we can apply idempotents to take the factor with $F=1$
and recover the Eilenberg-Moore example of Section
\ref{sec:EilenbergMoore} in the case that the ambient group is a torus
and the coefficients are rational.  
\end{remark}

\end{document}